\documentclass[12pt,a4paper]{article}

\usepackage{amssymb,amsmath}
\usepackage[mathscr]{eucal}  
\usepackage{amsfonts,amssymb,latexsym,amsmath,amsthm,epsfig,amscd}

\newtheorem{lemma}{Lemma}[section]
\newtheorem{cor}{Corollary}
\newtheorem{claim}{Claim}
\newtheorem{theorem}{Theorem}
\newtheorem{pro}{Proposition}
\newtheorem{question}{Question}
\newtheorem {defn}{Definition}
\newtheorem{remark}{Remark}

\oddsidemargin 0.3in
\topmargin -0.5in
\textwidth 15truecm
\textheight 25truecm
\pagestyle{plain}

\newcommand{\RN}{\mathbb{R}^N}
\newcommand{\RL}{\mathbb{R}^L}
\newcommand{\RH}{\mathbb{R}^H}

\newcommand{\x}{\mathbf{x}}
\newcommand{\X}{\mathbf{X}}
\newcommand{\y}{\mathbf{y}}
\newcommand{\Y}{\mathbf{Y}}

\newcommand{\A}{\mathbf{A}}
\newcommand{\B}{\mathbf{B}}

\newcommand{\xnm}{\|\x\|_{\infty}}
\newcommand{\ynm}{\|\y\|_{\infty}}
\newcommand{\Anm}{\|{\cal{A}}(\X)\|_{\infty}}
\newcommand{\Bnm}{\|{\cal{B}}(\Y)\|_{\infty}}
\newcommand{\M}{\vec{M}_{\nu -1}}

\begin{document}
\title{Schmidt's Game, Badly Approximable Linear Forms and Fractals}
\author{Lior Fishman} 
\maketitle

\begin{abstract}
We prove that for every $M,N\in \mathbb{N}$, if $\tau$ is a Borel, finite,
absolutely friendly measure supported
on a compact subset ${\cal{K}}$ of $\mathbb{R}^{MN}$,
then ${\cal{K}}\cap\textbf{BA}(M,N)$
is a winning set in Schmidt's game sense played on ${\cal{K}}$,
where $\textbf{BA}(M,N)$ is the set of 
badly approximable $M\times N$ matrices.
As an immediate consequence we have the following application. 
If ${\cal{K}}$ is the attractor of an irreducible finite family 
of contracting similarity maps of $\mathbb{R}^{MN}$ satisfying the open set condition, 
(the Cantor's ternary set, Koch's curve and Sierpinski's gasket to name a few
known examples), then
\begin{center}
dim${\cal{K}}=$ dim${\cal{K}}\cap \textbf{BA}(M,N)$.
\end{center}

\end{abstract}

\setcounter{section}{-1}

\section{Introduction}

\noindent In his paper \textsl{Badly Approximable Systems of Linear Forms} \cite{SSS}, 
W. M. Schmidt proved that the set of badly approximable 
$M\times N$ matrices in $\mathbb{R}^{MN}$
is uncountable and in fact of full Hausdorff dimension, i.e., $MN$.
His proof is based on what is now referred to as Schmidt's game, 
first introduced by Schmidt in \cite{S}.
More precisely, he proved that this set is $\frac{1}{2}$-winning, 
from which
the conclusion regarding the Hausdorff dimension (and thus the cardinality of this set) is drawn.  
(See \cite{SSS} for a comprehensive review of partial results obtained prior to \cite{SSS}).
In recent years similar questions have been posed regarding the intersection 
of badly approximable numbers and vectors with certain subsets of $\RN $. 
For example, let ${\cal{K}}$ be any of the following sets: 
Cantor dust, Koch's curve, Sierpinski's gasket, 
or in general, an attractor of an irreducible finite family of contracting 
similarity maps of $\mathbb{R}^{MN}$ 
satisfying the open set condition. 
(This condition due to J. E. Hutchinson \cite{H} is discussed in section 4). 
Denoting by $\textbf{BA}(M,N)$ the set of badly approximable $M\times N$ matrices , 
(where the case $N=1$ corresponds to 
badly approximable vectors)
one may ask the following questions:
\begin{enumerate}
	\item Is ${\cal{K}}\cap \textbf{BA}(M,N)\neq \emptyset$?
	\item If ${\cal{K}}\cap \textbf{BA}(M,N)\neq \emptyset$, what is dim$K\cap \textbf{BA}(M,N)$?
\end{enumerate}
Answers to both of these questions for $\textbf{BA}(M,1)$ have been independently given in \cite{KW} and \cite{KTV} 
and later strengthened by the author in \cite{LF} using Schmidt's game, 
proving dim$K\cap \textbf{BA}(M,1)$=dim${\cal{K}}$ for the family of sets mentioned above, but for the general case
where both $M$ and $N$ are strictly larger then one, the answer was, as far as we are aware of, unknown.

This paper's main results, theorem \ref{main theorem} and corollary \ref{application} 
generalize theses results
to the set of badly approximable matrices, hence proving an analogue to Schmidt's result in $\mathbb{R}^{MN}$.

We emphasize that the major difference, and for all practical purposes the only difference, between our proof and
that of Schmidt, is in lemma \ref{hard theorem} in our paper corresponding to lemma 4 in \cite{SSS}.
It is precisely in the proof of this lemma that player White has to specify his strategy. In Schmidt's paper this is done by
player White successively picking specific points in his opponent's previous chosen balls as the centers for his balls.
Unfortunately, we cannot follow this strategy simply by the fact that in any given ball centered on the support of our
measure, we have no way of determining whether a specific point belongs to the support of the measure 
(apart of course from the center point). Thus we have to resort to measure theoretic reasoning postulating the existence 
of ``good points'', i.e., points which could serve player White's strategy as centers for his balls.
This is done by utilizing results regarding absolutely friendly measures from 
D. Kleinbock, E. Lindenstrauss and B. Weiss, \textsl{On fractal measures and Diophantine approximation}
\cite{KLW}.   
Not originally intended for being a friendly environment for Schmidt's game, 
it turns out that the support of these measures is indeed hospitable to this game.\\

\noindent Section 5 is dedicated to a short discussion regarding 
the winning dimension of a set. (See section 5 for a formal definition). 
We show that Schmidt's optimal winning dimension result, windim$(\textbf{BA}(M,N)\cap \mathbb{R}^{MN})=\frac{1}{2}$ 
cannot be reproduced when playing on the Cantor ternary set.\\

\noindent In the last section we raise a question regarding the measure of the intersection of 
$\textbf{BA}(M,N)$
and the compact support of an absolutely friendly measure. We construct an example demonstrating the
need for additional research on the necessary  conditions for this measure of intersection to be 0.  

\subsection*{Acknowledgments}
\noindent  I thank the Israel Science Foundation through grant 2004149 for their support, the Clay Mathematics Institute and the BSF grant 2000247. 
\vspace{5mm}

\noindent My deepest gratitude to Dmitry Kleinbock for carefully reading this paper
and offering many insightful and helpful remarks as well as for inviting me to present my results
at the ``Shrinking Target Workshop''  held by the Clay Mathematics Institute.

\vspace{5mm}

\noindent Finally, It gives me great pleasure to thank Barak Weiss. 
I would not be in the least exaggerating in saying that without his help and support this paper would have not been written.

\section{Basic definitions, notations and formulation of main theorem}
\subsection{Linear forms}



\noindent If $t\in \mathbb{R}$, let $<t>$ denote the distance of $t$ from the nearest integer.\\ 


\noindent For $U\in\mathbb{R}^{D}$, $U=(u_{1}\dots u_{D})$ we define

\begin{center}
$dist(U,\mathbb{Z}^{D})=\|<u_{1}>,\dots , < u_{D}>\|_{\infty}$,
\end{center}

\noindent where for $V=(v_{1},\dots ,v_{D})$,
 $\|V\|_{\infty}=max_{1\leq i\leq D}\{|v_{1}|,\ldots ,|v_{D}|\}$.\\

\noindent Let  $M,N\in\mathbb{N}$ and let $A$ be a real $M\times N$ matrix.  

\noindent We say that $A$ is badly approximable
if there exists a real constant 

\noindent $0<C=C(A)$ such that for every $0\neq \x\in \mathbb{Z}^{N}$ 
 we have

\begin{equation}
dist(A \x ,\mathbb{Z}^{M})>C\|\x\|_{\infty}^{-\frac{N}{M}}.
\end{equation}
\vspace{3mm}

\noindent Denote by ${\textbf{BA}}(M,N)$ be the set of all $M\times N$ badly approximable matrices. \\

\noindent For the rest of this paper, 
if $U,V\in\mathbb{R}^{D}$ then $\left|U\right|$ 
is the usual vector length, i.e., $(\Sigma _{i=1}^{D}u_i^{2})^{\frac{1}{2}}$ and
$U\cdot V$ is the standard inner product. \\


\subsection{Schmidt's game}
\label{sg}

Let $(X,d)$ be a complete metric space and let ${\cal{S}}\subset X$ be 
a given set (a target set). 
Schmidt's game \cite{S} is played by two players White and Black, 
each equipped with parameters $\alpha $ and $\beta $ 
respectively, $0<\alpha ,\beta <1$. 
The game starts with player Black choosing $y_0\in X$ and $\rho>0$ 
hence specifying a closed ball $U(0)=B(y_0,\rho)$. 
Player White may now choose any point $x_0\in X$ provided that 
$W(0)=B(x_0,\alpha \rho)\subset U(0)$. 
Next, player Black chooses a point $y_1\in X$ such that
$U(1)=B(y_1,(\alpha \beta)\rho)\subset W(0)$. 
Continuing in the same manner we have a 
nested sequence of non-empty closed sets  
$U(0)\supset W(0)\supset U(1)\supset W(1)\supset\ldots\supset U(k)\supset W(k)\ldots$ 
with diameters tending to zero as $k\rightarrow\infty$. 
As the game is played on a complete metric space, 
the intersection of these balls 
is a point $z\in X$. Call player White the winner if $z\in {\cal{S}}$. 
Otherwise player Black is declared winner. 
A strategy consists of specifications for a player's choices 
of centers for his balls 
as a consequence of his opponent's previous moves. 
If for certain $\alpha $ and $\beta $ 
player White has a winning strategy, i.e., 
a strategy for winning the game regardless of how well player Black plays, 
we say that ${\cal{S}}$ is an 
$(\alpha , \beta)$-winning set. 
If ${\cal{S}}$ and $\alpha$ are such that ${\cal{S}}$ is an $(\alpha , \beta)$-winning set 
for all possible $\beta $'s, we say that ${\cal{S}}$ is an 
$\alpha $-winning set. 
Call a set winning if such an $\alpha $ exists.\\

\noindent  We shall be considering a slight variant of 
Schmidt's original definition of the game, say $\cal{K}$-Schmidt's game
(and we thank B. Weiss for drawing our attention to this point).

Specifically, the game is played on a compact subset 
${\cal{K}}$ of $\RN$ and in his first move, player Black
specifies a point $x_{0}$ in ${\cal{K}}$ and a positive number $\rho$.
These choices uniquely determine a standard Euclidean closed ball $B(\x,\rho)$ 
centered at $\x$ and of radius $\rho$.
From this point on, center
points picked by either players are in ${\cal{K}}$ and the radii of their balls of choice, $\rho(B)$,
(where the balls are considered as balls in $\RN$) are well defined.\\ 

\noindent We emphasize that one could avoid using the notation $\rho(B)$ by
referring to, for example, player Black balls' 
radii as $(\alpha \beta)^{k}\rho$ for some $k\in \mathbb{N}$. \\

\noindent Finally we assume that the first ball $U(0)$ specified by player Black satisfies 
$\rho(U(0))\leq \text{diam}{\cal{K}}$. We remark that no loss of generality occurs by this assumption,
as otherwise player's White's strategy is to play arbitrarily until the first integer $k$ is reached with
$\rho(U_{k})\leq \text{diam}{\cal{K}}$.

\subsection{Absolutely friendly measures}

\noindent For our next definitions we assume $N\in\mathbb{N}$ and
${\cal{P}}\subset\RN$ is an affine subspace. We 
denote by $d_{{\cal{P}}}(\x)$
the Euclidean distance from $\x\in\RN$ to ${\cal{P}}$.

\noindent Given $\epsilon>0$, let 

\begin{center}
${\cal{P}}^{(\epsilon)}=\{\x\in\RN :d_{{\cal{P}}}(\x)<\epsilon\}$.
\end{center}

\begin{defn} 
Let $\tau$ be a Borel, finite measure on $\mathbb{R}^{N}$.
We say that $\tau$ is \textbf{absolutely friendly} if the 
following conditions are satisfied:

There exist constants $\rho_0$, $C$, $D$ and $a$ 
such that for every $0<\rho\leq \rho_0$ and for every $\x \in supp(\tau)$:
\begin{itemize}
\item[(i)] for any  $0<\epsilon\leq \rho$, and any affine hyperplane $\cal P$,

$\tau(B(\x,r)\cap{\cal P}^{(\epsilon)})< C(\frac{\epsilon}{\rho})^{^a}\tau (B(\x ,\rho))$. 

\item[(ii)] $\tau(B(\x,\frac{1}{2} \rho))> D\tau(B(\x ,\rho))$.

\end{itemize}

\end{defn}

\begin{remark}
The second condition is usually referred to as the doubling or Federer property.
The term ``absolutely friendly'' was first coined in \cite{PV} where stronger
assumptions regarding the definition of friendly measures (see \cite{KLW})
were needed.

\end{remark}

\subsection{Main theorem}

\begin{theorem} 
\label{main theorem}
For every $M,N \in\mathbb{N}$, if $\tau$ is a Borel, finite, absolutely
friendly measure on $\mathbb{R}^{MN}$, supported on a compact subset ${\cal{K}}$
of $\mathbb{R}^{MN}$, then ${\cal{K}}\cap \bf{BA}(M,N)$ is a winning
set in Schmidt's game sense, played on ${\cal{K}}$.
\end{theorem}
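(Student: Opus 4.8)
\section*{Proof proposal}

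The plan is to follow Schmidt's original argument for the winning property of $\textbf{BA}(M,N)$ in $\mathbb{R}^{MN}$, adapting it to the game played on the compact support ${\cal K}$, and replacing the one geometric input that fails in this restricted setting by a measure-theoretic substitute. Fix $\alpha$ small (to be chosen depending on $M,N$) and an arbitrary $\beta\in(0,1)$, and let $R=(\alpha\beta)^{-1}$. White will play so that the intersection point $z$ of the nested balls avoids, with a uniform gap, every ``resonant'' affine subspace arising from a nonzero integer vector $\x\in\mathbb{Z}^N$. Concretely, to each $\x$ and each integer point $\p\in\mathbb{Z}^M$ one associates the set of matrices $A$ with $A\x$ within a prescribed distance of $\p$; this is a neighborhood of an affine subspace of $\mathbb{R}^{MN}$ of codimension $M$ (equivalently a union over the $M$ coordinates of neighborhoods of affine hyperplanes). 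The Diophantine condition (1.1) says precisely that $A\in\textbf{BA}(M,N)$ iff $A$ stays a definite distance (scaled by $\|\x\|_\infty^{-N/M}$) away from all these subspaces. So it suffices to show White can force $z$ to dodge, at an appropriate scale, the finitely many such subspaces that are ``dangerous'' at each stage of the game.

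The heart of the matter is the analogue of Schmidt's Lemma~4, which in our paper is \ref{hard theorem}: given a ball $U$ of radius $r$ arising in the game and an affine hyperplane ${\cal P}$, after a bounded number $\ell$ of moves (where $\ell=\ell(\alpha,\beta)$ but, crucially, does not depend on which hyperplane or which stage) White can choose his ball $W$ of radius $\alpha(\alpha\beta)^{\ell-1}r$ inside $U$ so that $W$ is disjoint from a neighborhood ${\cal P}^{(\epsilon)}$ of ${\cal P}$ with $\epsilon$ comparable to the radius of $W$. In Schmidt's Euclidean setting White simply centers his ball at a point of $U$ far from ${\cal P}$, which always exists because Euclidean balls are fat in every direction. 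On ${\cal K}$ this is impossible: apart from the center point, we have no control over which points of $U$ lie on $\K$. The fix is to use absolute friendliness. Condition (i) gives, for any affine hyperplane ${\cal P}$ and the relevant scales, $\tau\big(B(\x,r)\cap{\cal P}^{(\epsilon)}\big)<C(\epsilon/r)^{a}\,\tau(B(\x,r))$, so the mass of ${\cal K}$ near ${\cal P}$ inside the current ball is a small fraction of the total mass there; hence ${\cal K}\cap B(\x,r)$ contains points at distance $\gtrsim \epsilon$ from ${\cal P}$. Such a ``good point'' can serve as the center of one of White's intermediate balls, and condition (ii), the Federer property, guarantees that when White shrinks down from $B(\x,r)$ to a ball of radius comparable to $\alpha(\alpha\beta)^{\ell-1}r$ around that good point, the ball still meets ${\cal K}$ in positive measure and Black can continue the game legally inside it. Quantitatively one chooses $\ell$ so that $C\cdot(\alpha\beta)^{a(\ell-1)}<1$-ish, i.e., $\ell$ depends only on $\alpha,\beta,C,a$; this is exactly where absolute friendliness buys us the uniform bound that Schmidt got for free.

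With \ref{hard theorem} in hand, the rest is Schmidt's bookkeeping, essentially verbatim. White partitions the move sequence into consecutive blocks of length $\ell$. The integer vectors $\x$ are ordered by norm; as the game balls shrink past the scale $\|\x\|_\infty^{-(1+N/M)}$ associated with a given $\x$, exactly the subspaces coming from that $\x$ become ``active'', and there are only finitely many (indeed boundedly many, uniformly) new ones per block because the $\p$'s that can possibly come close to the current ball are confined to a bounded region. In each block White uses \ref{hard theorem} to push his ball away from the one newly active resonant hyperplane, while the contraction of the game automatically keeps it away from all previously handled ones (their forbidden neighborhoods shrink slower than the game balls once cleared). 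Taking $\alpha$ small enough that a single block suffices to neutralize each new constraint and that the accumulated errors converge, the limit point $z$ satisfies $dist(z\,\x,\mathbb{Z}^M)>C'\|\x\|_\infty^{-N/M}$ for all $0\neq\x\in\mathbb{Z}^N$ with a uniform $C'$, so $z\in{\cal K}\cap\textbf{BA}(M,N)$, proving that set is $\alpha$-winning.

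The main obstacle, and the only genuinely new point, is \ref{hard theorem}: making the ``good point'' argument work uniformly, i.e., showing that the number $\ell$ of White moves needed to escape a hyperplane neighborhood by a definite relative margin can be bounded independently of the hyperplane and of how deep into the game we are. This requires juggling the two friendliness constants $a$ (decay exponent near hyperplanes) and the Federer constant $D$ against the game parameters $\alpha,\beta$ simultaneously, and checking that the good point produced at one scale survives — still far from ${\cal P}$ and still carrying ${\cal K}$-mass — as White contracts over the remaining moves of the block. Once that uniformity is established, threading it through Schmidt's counting of active resonances is routine.
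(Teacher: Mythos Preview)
Your proposal has a genuine gap, and it is precisely the obstacle that distinguishes the matrix case $M,N>1$ from the vector case $N=1$ (which was indeed handled by direct hyperplane avoidance in \cite{LF}).

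The counting claim in your third paragraph is false. You assert that at each stage there are ``boundedly many, uniformly'' new resonant subspaces. For a single $\x\in\mathbb{Z}^{N}$ this is fine: only $O(1)$ of the codimension-$M$ affine subspaces $\{A:A\x=\p\}$ meet the current ball. But the number of $\x$ with $\|\x\|_{\infty}$ in a dyadic range $[Q,2Q]$ is of order $Q^{N}$, and all of them become active at comparable scales. When $N\geq 2$ you cannot neutralize $Q^{N}$ hyperplanes in a fixed number of White moves, so the block-by-block bookkeeping collapses. This is exactly why the paper (following Schmidt \cite{SSS}) does \emph{not} avoid hyperplanes one at a time.

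What the paper actually does is Schmidt's transference scheme: alternate between the $(M,N)$ system and its transpose $(N,M)$ system. Lemmas \ref{Schmidt first lemma} and \ref{Schmidt second lemma} (Schmidt's geometric lemmas) show that if one system has no solutions at a given scale, then all solutions of the dual system lie in a subspace of dimension at most $N$ (respectively $M$). Choosing an orthonormal basis $\Y_{1},\dots,\Y_{N}$ of that subspace reduces the entire swarm of constraints to a \emph{single} condition: the determinant $D=\det(\B_{u}\cdot\Y_{v})$ must be large compared with its cofactors (Cramer's rule, equation (\ref{last equation})). Lemma \ref{hard theorem} is therefore not a hyperplane-avoidance lemma at all; it is the statement that White can force the polynomial vector $\vec{M}_{\nu}(A)$ to be large, proved by induction on $\nu$ from $0$ to $N$. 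The absolutely friendly input required is correspondingly stronger than condition (i): one needs the decay estimate for \emph{polynomials} of bounded degree (Corollary \ref{cor barak}, taken from \cite{KLW}), which is what guarantees the existence of a good center $A_{0}\in{\cal K}$ with $|D_{\nu}(A_{0})|$ large. Your direct use of the affine-hyperplane decay alone would not reach the determinant functions that actually control the problem.
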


\vspace{10mm}


\section{Specific notations}

\noindent Set 

\begin{center}
$H=M\cdot N$ \hspace{10mm}and \hspace{10mm} $L=M+N$.\\
\end{center}

\noindent For the rest of the paper we shall assume $N\geq M$.
This assumption will not imply any loss of generality since in fact
``built in'' the proof is the fact that if the set of badly approximable $M\times N$
matrices is winning in ${\cal{K}}$-Schmidt's game, so is the set of their transposes.

\noindent We shall be playing Schmidt's game on ${\cal{K}}$ 
as defined in theorem \ref{main theorem}, 
where we identify points in $\RH$ with 
$M\times N$ real matrices.\\ 

\noindent For $k\in\mathbb{N}$ we denote the $k$th ball chosen by player White by 
$W(k)$ and respectively player Black's balls by $U(k)$.\\ 

\noindent Let $\rho=\rho(U(0))$ be the first radius chosen by player Black.\\

\noindent Given a Borel, finite, absolutely friendly measure $\tau$ supported on
a compact subset ${\cal{K}}\subset \mathbb{R}^{H}$, define 

\begin{equation}
\sigma =\sigma({\tau})=3\cdot\text{max}\{\|X\|\ : X\in {\cal{K}}\}. 
\end{equation}

\begin{remark} 
In Schmidt's paper, $\sigma $ was defined as the maximal norm of a point in $U(0)$,
and thus determined by
$U(0)$. 
In our settings, as $\rho(U(0))<diam({\cal{K}})$, 
(see discussion regarding this assumption in subsection \ref{sg}), 
$\sigma$ is determined by ${\cal{K}}=supp(\tau)$ 
and thus constants involving $\sigma$
in Schmidt's paper are viewed as constants involving $\tau$.
\end{remark}

\noindent We assign boldface lower case letters ($\x$, $\y$,...etc.) 
to denote points in $\mathbb{R}^N$ and $\mathbb{R}^M$ while boldface upper case letters 
($\X$, $\Y_i$, $\B_i$...etc.) denote points in $\mathbb{R}^L$. 
Finally, upper case letters ($A$,$X$,$Y$,...etc.) denote points in $\mathbb{R}^H$.\\


\section{Proof of theorem}

\noindent The proof will be presented in the following order. In the first subsection
we shall begin by stating lemma \ref{hard theorem}, lemma \ref{hard theorem 2}
and derive corollary \ref{corollary of main lemma}. 
We shall then proceed and ultimately prove our main theorem, theorem \ref{main theorem}.
Once this result is established we shall prove lemma \ref{hard theorem} in the following section. 
(Theorem \ref{hard theorem 2} could be proved in an identical way to theorem \ref{hard theorem}).

\noindent The rationale behind this way of presentation is the following. 
Lemma \ref{hard theorem} and lemma \ref{hard theorem 2} are, 
to quote Schmidt when referring to the analogous lemmas in \cite{SSS}, difficult. 
One of the main difficulties is the need for seemingly obscure notations and definitions.
Furthermore, in our case, we shall also need to utilize some 
deeper results concerning absolutely friendly measures.
We hope that by demonstrating the relatively effortless way one derives the main theorem
once these lemmas are proved will convince the reader of their necessity.\\\\   
 
\subsection{Proof of theorem \ref{main theorem} assuming lemma \ref{hard theorem}}

\subsubsection{Yet some more notation}

\noindent We begin with some more notations and definitions.

\noindent For any 
\begin{center}
$A=\left(\begin{matrix}
\gamma_{11} & . & . & . & \gamma_{1N}\\
. & . & . & .& . \\
. & . & . & .& . \\
. & . & . & .& . \\
\gamma_{M1} & . & . & . & \gamma_{MN}
\end{matrix}\right)$
\end{center}

\noindent let

\begin{center}
$\A_1=(\gamma_{11},...,\gamma_{1N},1,0,...,0),...,
\A_M=(\gamma_{M1},...,\gamma_{MN},0,0,...,1)$
\end{center}

\begin{center}
$\B_1=(\gamma_{11},...,\gamma_{M1},1,0,...,0),...,
\B_N=(\gamma_{1N},...,\gamma_{MN},0,0,...,1)$.
\end{center}

\vspace{5mm}

\noindent Let $\X,\Y\in\mathbb{Z}^{L}$ of the form  
\begin{equation}
\X=(x_1,...,x_N,...,x_L) : \x=(x_1,...,x_N)\neq (0,...,0),
\label{special X}
\end{equation}

\begin{equation}
\Y=(y_1,...,y_M,...,y_L) :\y = (y_1,...,y_M)\neq (0,...,0).
\label{special Y}
\end{equation}

\noindent Set
\begin{equation}
{\cal{A}}(\X)=(|\A_{1}\cdot\X|,\ldots ,|\A_{M}\cdot\X|)
\label{special A}
\end{equation}

\noindent and
 
\begin{equation}
{\cal{B}}(\Y)=(|\B_{1}\cdot\Y|,\ldots ,|\B_{N}\cdot\Y|).
\label{special B}
\end{equation}\\

\noindent We notice that a matrix $A$ associated with a system of linear forms lies in 
${\textbf{BA}}(M,N)$ if and only if there exists a constant $C$ such that 
for all $\X$ such as in (\ref{special X}) 

\begin{equation}
\|\x\|_{\infty}^{N}\cdot\|{\cal{A}}(\X)\|_{\infty}^{M}>C.
\label{BA def}
\end{equation}

\vspace{10mm}

\noindent For a fixed $N$ and $v$, where $1\leq v \leq N$
and for any ${\cal{Y}} =\{\Y_1,...,\Y_N\}$,
there are ${\binom{N}{v}}^{2}$ matrices of the form 
\begin{equation}
\left( \B_{i_{k}}\cdot\Y_{j_{l}}\right)
\label{matrix 1}
\end{equation} 

\noindent where $1\leq i_1<...<i_v\leq N$ and $1\leq j_1<...<j_v\leq N$.

\noindent Define 

\begin{center}
$\vec{M}_{v, {\cal{Y}}}(A)\in \mathbb{R}^{{\binom{N}{v}}^2}$
\end{center}

\noindent as the vector whose components are 
the absolute value of the determinants of (\ref{matrix 1}) arranged
in some order.\\

\vspace{5mm}

\noindent Similarly for a fixed $M$ and $\nu$, where $1\leq v\leq M$
and for any ${\cal{Y^{'}}}=\{\Y_1,...,\Y_M\}$,
there are ${\binom{M}{v}}^{2}$ matrices of the form 
\begin{equation}
\left( \A_{i_{k}}\cdot\Y_{j_{l}}\right)
\label{matrix 2}
\end{equation} 

\noindent where $1\leq i_1<...<i_v\leq M$ and $1\leq j_1<...<j_v\leq M$.

\noindent Define 

\begin{center}
$\vec{M}^{'}_{v,{\cal{Y^{'}}}}(A)\in \mathbb{R}^{{\binom{M}{v}}^2}$
\end{center}

\noindent as the vector whose components are 
the absolute value of the determinants of (\ref{matrix 2}) arranged
in some order.\\

\noindent As we shall consequently see, we shall only be assuming that 
the elements of the sets ${\cal{Y}}$ and ${\cal{Y}^{'}}$ are orthonormal, but the proofs
DO NOT depend on a specific ${\cal{Y}}$ or ${\cal{Y^{'}}}$. (This is perhaps the most
important part of our main theorem's proof). 
\noindent Thus from this point on, for any fixed 
${\cal{Y}}$, we shall write
$\vec{M}_v(A)$ to mean $\vec{M}_{v, {\cal{Y}}}(A)$.\\

\noindent Define $\vec{M}_0(A)$, (similarly $\vec{M}^{'}_0(A)$) and 
$\vec{M}_{-1}(A)$, (similarly $\vec{M}^{'}_{-1}(A)$) as the 
one dimensional vector $(1)$.

\noindent For a closed ball $B\subset \mathbb{R}^{H}$, let

\begin{center}
$M_v(B)=max_{{A\in B}}\left|\vec{M}_v(A)\right|$ \hspace{5mm} and \hspace{5mm} 
$M_v^{'}(B)=max_{{A\in B}}\left|\vec{M}^{'}_v(A)\right|$.
\end{center}

\vspace{10mm}

\subsubsection{More on absolutely friendly measures}

\noindent For a ball $B\subset\RN$ and a real valued function $f$ on $\RN$, let

\begin{center}
$\|f\|_{B}=\text{sup}_{\x\in B}|f(\x)|$.\\
\end{center}

\noindent As an immediate consequence of 
proposition 7.3 in \cite{KLW} one has
the following corollary.

\begin{cor}
Let $\tau$ be a Borel, finite, absolutely friendly measure on $\mathbb{R}^{H}$. 
Then for every $k$ there exist $K=K(k)$ and $\delta=\delta(k)$ such that if 
$f$ is a real polynomial function on $\mathbb{R}^{H}$ 
of a bounded total degree 
$k$, then for any ball $B\subset\mathbb{R}^{H}$ centered on 
$supp(\tau)$ and any $\epsilon>0$, 
\begin{equation}
\label{good}
\tau(\{x\in B:|f(x)|<\epsilon\})\leq K(\frac{\epsilon}{\|f\|_{B}})^{\delta}\tau(B).
\end{equation}
\label{cor barak}
\end{cor}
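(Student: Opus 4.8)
The plan is to obtain Corollary \ref{cor barak} as a direct specialization of the relevant structural statement about friendly measures in \cite{KLW}, namely proposition 7.3 there, which asserts that friendly (and in particular absolutely friendly) measures are what is usually called \emph{$(C,\alpha)$-decaying and non-planar}, and hence satisfy a quantitative sublevel-set estimate for polynomials of bounded degree. First I would recall precisely the form of proposition 7.3 in \cite{KLW}: it provides, for each $d\in\mathbb{N}$, constants depending only on $d$ and on the data of the measure, such that for every polynomial $f$ of degree at most $d$ and every ball $B$ centered on $supp(\tau)$, the $\tau$-measure of the set where $|f|$ is smaller than $\epsilon$ times its sup-norm on $B$ is bounded by $K(\epsilon)^{\delta}\tau(B)$. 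The point is that an absolutely friendly measure on $\mathbb{R}^H$ satisfies conditions (i) and (ii) of the Definition above, which are exactly (a mild reformulation of) the decay and Federer hypotheses needed to invoke that proposition; the non-planarity that proposition 7.3 also requires is automatic here because condition (i) quantifies the decay near \emph{every} affine hyperplane, so $\tau$ cannot be supported on any hyperplane.

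The key steps, in order, are: (1) verify that the hypotheses of \cite{KLW}, proposition 7.3, hold for $\tau$ — i.e. translate conditions (i) and (ii) of absolute friendliness into the decay/Federer/non-degeneracy language of \cite{KLW}, noting that the parameter $\rho_0$ controls the scale and $a,C,D$ furnish the required exponents and constants; (2) apply proposition 7.3 with the degree bound $k$ to get constants $K=K(k)$ and $\delta=\delta(k)$ depending only on $k$ and on the (now fixed) measure $\tau$; (3) unwind the conclusion of proposition 7.3 into the explicit inequality \eqref{good}, observing that $\|f\|_B=\sup_{\x\in B}|f(\x)|$ is precisely the normalization appearing there, and that a ball $B\subset\mathbb{R}^H$ centered on $supp(\tau)$ is an admissible ball in the sense of \cite{KLW}. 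Since the statement allows $K$ and $\delta$ to depend on $k$, no uniformity over degrees is needed, which keeps this completely routine. One should also note the harmless point that if $\|f\|_B=0$ then $f\equiv 0$ on $B$ and the estimate is vacuous (or one restricts to $\epsilon<1$, in which case the left-hand side is empty), so \eqref{good} holds trivially in the degenerate case.

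The only mild obstacle is bookkeeping: \cite{KLW} states its results for friendly measures, whose definition bundles together several conditions (Federer, decay, absolute decay / non-planarity) in a form superficially different from the two-line Definition used here, so one must check that ``absolutely friendly'' as defined above really does imply the hypotheses of proposition 7.3. This is exactly the content of the Remark following the Definition, which attributes the terminology to \cite{PV} and records that absolute friendliness is in fact a \emph{stronger} assumption than the friendliness of \cite{KLW} precisely in the way relevant here; so this implication is immediate once the definitions are lined up, and the corollary follows. There is no genuine difficulty — the corollary is, as the text says, ``an immediate consequence'' — and the proof amounts to a citation plus this verification of hypotheses.
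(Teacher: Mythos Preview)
Your proposal is correct and matches the paper's approach exactly: the paper itself offers no proof beyond the sentence ``As an immediate consequence of proposition 7.3 in \cite{KLW} one has the following corollary,'' and your write-up is precisely this citation together with the routine verification that absolute friendliness implies the hypotheses needed there.
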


\noindent Thus given a Borel, finite, absolutely friendly measure $\tau$ 
on $\mathbb{R}^{H}$ and a polynomial function of
total bounded degree $L$
with associated constants $K=K(L)$ and $\delta =\delta(L)$ as 
in corollary \ref{cor barak} , let 

\begin{center}
$0<\epsilon_{0}=\epsilon_{0}(\tau,M,N)$  
\end{center}

\noindent be small enough as to satisfy

\begin{equation}
K(\epsilon_{0})^{\delta}<\frac{1}{2}.
\label{epsilon_0}
\end{equation}

\subsubsection{Statement of main lemmas}

\begin{lemma}
Given $\tau$, a Borel, finite absolutely friendly measure 
with $supp(\tau)={\cal{K}}$, where ${\cal{K}}$
is a compact subset of $\mathbb{R}^{H}$, 
we play Schmidt's game on ${\cal{K}}$ such that all balls 
chosen by the two players are centered on ${\cal{K}}$.    
Let $\epsilon_{0}$ be  
as defined in (\ref{epsilon_0}).
Then for any $\psi>0$, there exists
\begin{center}
$0<\alpha_{1} =\alpha_{1} (M,N,\psi,\tau)$, 
\end{center}

\noindent and for any $0<\beta<1$, $0\leq \nu \leq N$ , there exists

\begin{center}
$\mu_{\nu}=\mu_{\nu}(M,N,\alpha_{1}, \beta,\psi,\tau)$
\end{center}

\noindent such that for any $\Y_{1},...,\Y_{N}$ orthonormal vectors in $\mathbb{R}^{L}$,
if a ball $U\subset\mathbb{R}^{H}$ satisfying 
$\rho(U)=\rho_{0}<1$ is reached by player Black at some stage of the $(\alpha_{1},\beta)$ 
game, then 
player White has a strategy enforcing the first of player Black's ball $U(i_{\nu})$ with
\begin{center}
$\rho(U(i_{\nu}))<\rho_{0}\mu_{\nu}$
\end{center}
to satisfy for every $A\in U(i_{\nu})$
\begin{equation}
|\vec{M}_{\nu}(A)|>\left(\frac{\epsilon_{0}}{2}\right)^{\nu}\psi\rho_{0}\mu_{\nu} M_{\nu -1}(U(i_{\nu})).
\label{46}
\end{equation}
\label{hard theorem}
\end{lemma}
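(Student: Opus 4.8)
The plan is to prove Lemma \ref{hard theorem} by induction on $\nu$, following the skeleton of Schmidt's argument for Lemma 4 in \cite{SSS}, but replacing every step in which White picks a specific point of his opponent's ball by a measure-theoretic existence argument based on Corollary \ref{cor barak}. For the base case $\nu=0$ (and similarly $\nu=-1$), we have $\vec{M}_0(A)\equiv(1)$ and $M_{-1}(U)\equiv 1$, so (\ref{46}) reduces to the inequality $1>\left(\frac{\epsilon_0}{2}\right)^0\psi\rho_0\mu_0$, which holds provided $\mu_0$ is chosen small relative to $\psi$ and we recall $\rho_0<1$; here White plays arbitrarily, and $U(i_0)=U$ itself works.

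For the inductive step, suppose the statement holds for $\nu-1$ with constants $\alpha_1$ and $\mu_{\nu-1}$. Starting from Black's ball $U$ with $\rho(U)=\rho_0$, apply the induction hypothesis to obtain a ball $U(i_{\nu-1})$ with $\rho(U(i_{\nu-1}))<\rho_0\mu_{\nu-1}$ on which $|\vec{M}_{\nu-1}(A)|$ is large relative to $M_{\nu-2}(U(i_{\nu-1}))$. I would then have White play finitely many further moves, at each step using the following dichotomy: either $M_{\nu-1}$ has already dropped by a controlled factor relative to $M_\nu$ on the current ball, in which case (\ref{46}) is essentially secured and White coasts; or it has not, and then on the current ball the entries of the $\vec{M}_\nu$-determinants, regarded as polynomial functions of $A\in\mathbb{R}^H$ of total degree at most $L$, are bounded below in sup-norm by a definite multiple of $M_{\nu-1}$ times the radius. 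In the latter case the set of $A$ in Black's next ball where $|\vec{M}_\nu(A)|$ is \emph{small} has, by Corollary \ref{cor barak} together with the normalization (\ref{epsilon_0}), measure at most $\frac12\tau(\text{ball})$; hence by the doubling property there is a ``good point'' in the support of $\tau$, sitting in a sub-ball of half the radius, around which White can center his ball so as to force $|\vec{M}_\nu|$ to stay above $\frac{\epsilon_0}{2}$ times its previous sup-value. Iterating this finitely many times (the number of iterations bounded in terms of $M,N,\alpha_1,\beta,\psi$) and tracking how the radius shrinks by the factor $(\alpha_1\beta)$ at each Black move yields the required $\mu_\nu$, and the exponent $\nu$ in $\left(\frac{\epsilon_0}{2}\right)^\nu$ appears because one factor of $\frac{\epsilon_0}{2}$ is lost at each of the $\nu$ inductive levels.

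The parameter $\alpha_1$ must be chosen small once and for all (independently of $\nu$, $\beta$ and the $\Y_j$) so that White's ball is always large enough inside Black's ball to contain a half-radius sub-ball centered at a good point; the doubling constant $D$ and the friendliness exponent $a$ enter here, and the fact that the polynomial estimate (\ref{good}) is uniform over balls centered on $\text{supp}(\tau)$ is exactly what makes the argument independent of the particular orthonormal system ${\cal{Y}}=\{\Y_1,\dots,\Y_N\}$, as emphasized in the text.

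\textbf{Main obstacle.}
The hard part will be the ``good point'' step: unlike in Schmidt's Euclidean argument, White cannot name the center of his next ball explicitly, since he has no way to certify that an individual point other than a center lies in $\text{supp}(\tau)$. The resolution is to show that the \emph{bad} set --- points $A$ in Black's ball at which the relevant determinant polynomial is too small --- occupies less than full $\tau$-measure of a suitable half-radius sub-ball, which forces the complement to be nonempty and in fact to meet $\text{supp}(\tau)$; this is precisely where Corollary \ref{cor barak} and the threshold (\ref{epsilon_0}) do the work. Making the bookkeeping of the many constants ($\psi$, $\alpha_1$, $\beta$, the $\mu_\nu$'s, $\epsilon_0$, $K$, $\delta$, $\sigma$) consistent across the induction, and ensuring the lower bound on $\|f\|_B$ needed to apply (\ref{good}) actually holds on each ball White produces, is the delicate part of the write-up.
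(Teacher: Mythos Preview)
Your overall architecture---induction on $\nu$, with the trivial base case and the inductive step replacing Schmidt's explicit center choice by a measure-theoretic existence argument through Corollary~\ref{cor barak} and the threshold (\ref{epsilon_0})---is exactly the paper's plan. The genuine gap is the step you yourself flag as ``delicate'': you assert that in the non-trivial branch of your dichotomy the determinant polynomials $D_\nu$ are ``bounded below in sup-norm by a definite multiple of $M_{\nu-1}$ times the radius,'' but you give no mechanism for this, and it does not follow from the dichotomy. The induction hypothesis only says that the \emph{vector} $\vec{M}_{\nu-1}(A)$ has large Euclidean norm; it does not single out any particular $(\nu-1)\times(\nu-1)$ minor, and the partial derivatives of a fixed $D_\nu$ are only \emph{some} of those minors. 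The paper obtains the sup-norm lower bound via Schmidt's gradient machinery (Propositions~\ref{lemma 5}, \ref{cor 2} and especially Proposition~\ref{lemma 6}, i.e.\ Lemma~5, Corollaries~1--2 and Lemma~6 of \cite{SSS}): if some $A'$ in the current ball has $|\vec{M}_\nu(A')|$ small, one reorders so that $D_{\nu-1}(A')$ is the largest coordinate of $\vec{M}_{\nu-1}(A')$, whence $|\nabla D_\nu(A')|>C_4 M_{\nu-1}(U')$; since $\nabla D_\nu$ oscillates little on a suitably shrunk ball $U(j_\nu)$, moving from the center along $D'=\nabla D_\nu(A')$ produces an explicit point $A_M$ (not required to lie in $\text{supp}(\tau)$) with $|D_\nu(A_M)|$ large. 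Only now can (\ref{good}) be applied to extract a good center $A_0\in\text{supp}(\tau)$. Without this gradient step you cannot feed Corollary~\ref{cor barak} the lower bound on $\|D_\nu\|_B$ it requires.

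A second, structural difference: the paper does \emph{not} iterate within a fixed level~$\nu$. After $U(i_{\nu-1})$ is reached, White plays arbitrarily until a single ball $U(j_\nu)$ satisfying the radius condition (\ref{3 conditions}), makes \emph{one} decisive move $W(j_\nu)=B(A_0,\alpha_1\rho(U(j_\nu)))$, and then coasts until $U(i_\nu)$. Exactly one factor $\epsilon_0/2$ is lost per inductive level, yielding $(\epsilon_0/2)^\nu$. Your ``iterate finitely many times'' scheme would either incur further factors of $\epsilon_0$ at each pass---destroying the stated bound---or would require a separate argument that it does not, which is absent. Finally, the doubling property plays no role at this point; the only input from absolute friendliness is (\ref{good}), and the legality of White's move $W(j_\nu)\subset U(j_\nu)$ is arranged simply by taking the auxiliary ball $\Omega$ of radius $(1-\alpha_1)\rho(U(j_\nu))$ about the center of $U(j_\nu)$.
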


\noindent The following lemma can be proved almost exactly as 
lemma \ref{hard theorem}, substituting $M$ for $N$ in the appropriate places. 

\begin{lemma}
Given $\tau$, a Borel, finite absolutely friendly measure 
with $supp(\tau)={\cal{K}}$, where ${\cal{K}}$
is a compact subset of $\mathbb{R}^{H}$,
we play Schmidt's game on ${\cal{K}}$ such that all balls 
chosen by the two players are centered on ${\cal{K}}$.    
Let $\epsilon_{0}$ be  
as defined in (\ref{epsilon_0}).
Then for any $\psi>0$, there exists
\begin{center}
$0<\alpha_{2} =\alpha_{2} (M,N,\psi,\tau)$, 
\end{center}

\noindent and for any $0<\beta<1$, $0\leq \nu \leq M$ , there exists

\begin{center}
$\mu_{\nu}=\mu_{\nu}(M,N,\alpha_{2}, \beta,\psi,\tau)$
\end{center}

\noindent such that for any $\Y_{1},...,\Y_{M}$ orthonormal vectors in $\mathbb{R}^{L}$,
if a ball $U\subset\mathbb{R}^{H}$ satisfying 
$\rho(U)=\rho_{0}<1$ is reached by player Black at some 
stage of the $(\alpha_{2},\beta)$ game, then 
player White has a strategy enforcing the first of player Black's ball $U(i_{\nu})$ with
\begin{center}
$\rho(U(i_{\nu}))<\rho_{0}\mu_{\nu}$
\end{center}
to satisfy for every $A\in U(i_{\nu})$
\begin{equation}
|\vec{M}_{\nu}(A)|>\left(\frac{\epsilon_{0}}{2}\right)^{\nu}\psi\rho_{0}\mu_{\nu} M_{\nu -1}U(i_{\nu}).
\end{equation}
\label{hard theorem 2}
\end{lemma}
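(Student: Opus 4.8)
\textbf{Proof plan for Lemma \ref{hard theorem 2}.} The statement is the ``transpose'' of Lemma \ref{hard theorem}: it concerns the vectors $\vec{M}^{'}_{\nu}(A)$ built from the forms $\A_1,\dots,\A_M$ and the determinants of the matrices $(\A_{i_k}\cdot\Y_{j_l})$, with $\nu$ now ranging over $0\leq\nu\leq M$ rather than $0\leq\nu\leq N$. The plan is therefore to run the entire argument of Lemma \ref{hard theorem} verbatim, with the roles of the linear forms $\A$ and $\B$ (and correspondingly of $M$ and $N$, and of the index bounds $\binom{M}{\nu}$ versus $\binom{N}{\nu}$) interchanged. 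Concretely, I would proceed by induction on $\nu$ exactly as in the proof of Lemma \ref{hard theorem}: the base case $\nu=0$ (and the auxiliary case $\nu=-1$) is immediate since $\vec{M}^{'}_0(A)=\vec{M}^{'}_{-1}(A)=(1)$ and one only needs $\rho(U(i_0))<\rho_0\mu_0$ for a suitable $\mu_0$, which any play achieves after finitely many steps; the inductive step passes from $\vec{M}^{'}_{\nu-1}$ to $\vec{M}^{'}_{\nu}$.

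For the inductive step I would, as in Lemma \ref{hard theorem}, have player White wait until Black reaches the ball $U(i_{\nu-1})$ guaranteed by the inductive hypothesis, and then, inside that ball, choose his next ball centered at a ``good point'' — a point of $\K$ at which a suitable coordinate of $\vec{M}^{'}_{\nu}(A)$ (one of the $\binom{M}{\nu}^2$ determinants $\det(\A_{i_k}\cdot\Y_{j_l})$) is bounded below in terms of $M'_{\nu-1}(U(i_{\nu-1}))$. Such a coordinate is a polynomial in the matrix entries of $A$ of total degree at most $L=M+N$ (each determinant is a $\nu\times\nu$ determinant of linear forms, so degree $\nu\leq M\leq L$), so Corollary \ref{cor barak} applies with the constants $K=K(L),\delta=\delta(L)$ and the choice of $\epsilon_0$ in (\ref{epsilon_0}) guarantees that the set of bad points in any ball centered on $\K$ has $\tau$-measure less than $\tfrac12$ of the ball; hence the good points form a set of positive measure and in particular are nonempty, and by the Federer property (ii) a definite proportion of White's available ball is filled by good points, so White can indeed center there. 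One then propagates this lower bound through the subsequent $\alpha\beta$-contractions using the doubling property and the Lipschitz continuity of $\vec{M}^{'}_{\nu}$ on the compact $\K$, picking $\alpha_2$ small enough (depending on $M,N,\psi,\tau$) and $\mu_\nu$ accordingly, exactly as the corresponding constants $\alpha_1,\mu_\nu$ are chosen in Lemma \ref{hard theorem}.

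The one genuine point to check — and the only place the symmetry is not completely cosmetic — is that all the degree bounds and combinatorial bookkeeping still go through with $M$ and $N$ swapped: the relevant polynomials still have degree $\leq L$ (since $\nu\leq M\leq L$ and $\nu\leq N\leq L$ in the two lemmas respectively), so the \emph{same} $\epsilon_0$, and the same corollary \ref{cor barak}, serve both lemmas, and no new measure-theoretic input is needed. The main obstacle is thus not conceptual but notational: one must carefully track which family of forms ($\A$ or $\B$) and which binomial coefficient appears at each step, and verify that the recursive definition of $\mu_\nu$ and the smallness condition on $\alpha_2$ close up over the range $0\leq\nu\leq M$. Since the proof of Lemma \ref{hard theorem} is designed (as the authors stress) not to depend on the specific orthonormal system $\{\Y_i\}$, the substitution $M\leftrightarrow N$ introduces nothing that the original argument does not already handle, and I would simply indicate these replacements rather than rewriting the proof in full.
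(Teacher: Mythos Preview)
Your proposal is correct and is exactly the approach the paper takes: the paper does not give a separate proof of Lemma~\ref{hard theorem 2} but simply states that it ``can be proved almost exactly as lemma~\ref{hard theorem}, substituting $M$ for $N$ in the appropriate places,'' which is precisely the $\A\leftrightarrow\B$, $M\leftrightarrow N$ transposition you describe. Your observation that the relevant determinants still have total degree at most $L$ (so the same $\epsilon_0$ from (\ref{epsilon_0}) and the same Corollary~\ref{cor barak} apply) is the one substantive check needed, and it is correct.
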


\subsubsection{Immediate corollary}

\begin{cor}
Given $\tau$, a Borel, finite absolutely friendly measure 
with $supp(\tau)={\cal{K}}$, where ${\cal{K}}$
is a compact subset of $\mathbb{R}^{H}$,   
we play Schmidt's game on ${\cal{K}}$ such that all balls 
chosen by the two players are centered on ${\cal{K}}$.  

\noindent There exists
\begin{center}
$0<\alpha =\alpha(M,N,\tau)$, 
\end{center}

\noindent and given any $0<\beta<1$, there exists

\begin{center}
$\mu=\mu(M,N,\alpha, \beta,\tau)$
\end{center}

\noindent such that for any $0<\mu ^{'}\leq \mu$ and for any $\Y_{1},...,\Y_{N}$ 
orthonormal vectors in $\mathbb{R}^{L}$,
if a ball $U\subset\mathbb{R}^{H}$ centered on ${\cal{K}}$ satisfying 
$\rho(U)<1$ is reached by player Black at some stage of the game, then 
player White has a strategy enforcing the first of player Black's ball $U(l)$ with
\begin{center}
$\rho(U(l))<\rho(U)\mu^{'}$
\end{center}
to satisfy for every $A\in U(l)$
\begin{equation}
|\vec{M}_{N}(A)|>L\sqrt{L}\rho(U)\mu^{'} M_{N -1}(U(l)).
\label{46}
\end{equation}
Alternatively under the same assumptions on $U$, for any $\Y_{1},...,\Y_{M}$  
orthonormal vectors in $\mathbb{R}^{L}$ 
player White has a strategy enforcing the first of player Black's balls $U(l^{'})$ with
\begin{center}
$\rho(U(l^{'}))<\rho(U)\mu^{'}$
\end{center}
to satisfy for every $A\in U(l^{'})$
\begin{equation}
|\vec{M}^{'}_{M}(A)|>L\sqrt{L}\rho(U)\mu^{'} M^{'}_{M -1}(U(l^{'})).
\label{46a}
\end{equation}

\label{corollary of main lemma}

\end{cor}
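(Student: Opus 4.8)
The plan is to obtain the corollary as an essentially immediate specialization of Lemma \ref{hard theorem} (respectively Lemma \ref{hard theorem 2}) by making a single concrete choice of the free parameter $\psi$. In Lemma \ref{hard theorem} the conclusion, applied with $\nu = N$, reads $|\vec{M}_N(A)| > \left(\frac{\epsilon_0}{2}\right)^N \psi \rho_0 \mu_N M_{N-1}(U(i_N))$ for every $A$ in the ball $U(i_N)$, which has radius less than $\rho_0 \mu_N$. So the only gap between that inequality and the desired $|\vec{M}_N(A)| > L\sqrt{L}\,\rho(U)\mu' M_{N-1}(U(l))$ is the value of the leading constant. First I would simply set $\psi = \psi(M,N,\tau) := L\sqrt{L}\,\bigl(\frac{2}{\epsilon_0}\bigr)^N$, so that $\left(\frac{\epsilon_0}{2}\right)^N \psi = L\sqrt{L}$; this is a legitimate choice since $\epsilon_0 = \epsilon_0(\tau,M,N)$ is already fixed and positive. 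With this $\psi$, Lemma \ref{hard theorem} produces an $\alpha_1 = \alpha_1(M,N,\psi,\tau)$, which we rename $\alpha = \alpha(M,N,\tau)$, and for each $\beta$ a $\mu_N = \mu_N(M,N,\alpha,\beta,\tau)$, which we rename $\mu = \mu(M,N,\alpha,\beta,\tau)$.

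Next I would address the quantifier over $0 < \mu' \le \mu$, which is the one genuinely new feature relative to the lemma. The point is that if player Black reaches a ball $U$ with $\rho(U) < 1$, White can first stall: before invoking his Lemma \ref{hard theorem} strategy, White plays arbitrarily (always a legal move, since he only needs $W(k) \subset U(k)$) until Black has been forced down to a ball $U'$ with $\rho(U') \le \rho(U)\cdot \mu'/\mu$. Because the radii of Black's successive balls decrease by the factor $\alpha\beta < 1$ at each round, such a stage is reached after finitely many moves. At that point White applies the Lemma \ref{hard theorem} strategy to the subgame starting from $U'$ with $\rho_0 := \rho(U') < 1$: the lemma hands back a ball $U(l)$ with $\rho(U(l)) < \rho(U')\,\mu_N = \rho(U')\,\mu \le \rho(U)\,\mu'$ on which $|\vec{M}_N(A)| > L\sqrt{L}\,\rho(U')\,\mu\, M_{N-1}(U(l))$. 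I would then note $\rho(U')\,\mu \ge \rho(U)\,\mu' \cdot (\mu/\mu) $... more carefully: since $\rho(U') \ge \alpha\beta \cdot \rho(U)\mu'/\mu$ would only give a lower bound, it is cleaner to stall until $\rho(U)\mu'/(\mu)\, \cdot(\alpha\beta) < \rho(U') \le \rho(U)\mu'/\mu$, whence $\rho(U')\mu > \alpha\beta\,\rho(U)\mu'$, and absorb the harmless factor $\alpha\beta$ by having shrunk $\epsilon_0$ (equivalently enlarged $\psi$) by that factor at the outset. Thus replacing the choice above by $\psi := L\sqrt{L}\,(\alpha\beta)^{-1}\bigl(\tfrac{2}{\epsilon_0}\bigr)^N$ — legal since $\alpha$ is determined before $\beta$ only in name; in practice one fixes $\psi$ using a crude universal lower bound, or reorders the dependence — yields the stated inequality with constant exactly $L\sqrt{L}$.

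The second assertion, with $\vec{M}'_M$, $M'_{M-1}$ and orthonormal $\Y_1,\dots,\Y_M$, follows verbatim from Lemma \ref{hard theorem 2} with $\nu = M$ by the identical renaming and stalling argument; the symmetry $N \leftrightarrow M$ noted in Section 2 is exactly what makes the two halves parallel. The main obstacle, such as it is, is bookkeeping rather than mathematics: one must be careful that $\alpha$ is produced independently of $\beta$ (which Lemma \ref{hard theorem} guarantees, since $\alpha_1$ depends only on $M,N,\psi,\tau$), so that the corollary's ``there exists $\alpha$, and given any $\beta$ there exists $\mu$'' quantifier structure is respected, and that the finite stalling phase does not damage the requirement that $U(l)$ be \emph{the first} of Black's balls with radius below the threshold — which is automatic because the Lemma \ref{hard theorem} strategy itself delivers the first such ball within its subgame, and the stalling phase involves strictly larger radii. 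I do not expect any estimate here to require more than the elementary inequalities already in play.
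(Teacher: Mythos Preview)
Your overall strategy --- specialize $\psi$ in Lemmas \ref{hard theorem} and \ref{hard theorem 2}, then handle the extra quantifier over $\mu'$ --- is the right one, and matches the paper's intent. But the mechanism you propose for the $\mu'$ quantifier (stall down to a smaller ball, then reapply the lemma) creates a genuine circularity that you flag but do not resolve: once you set $\psi := L\sqrt{L}\,(\alpha\beta)^{-1}\bigl(\tfrac{2}{\epsilon_0}\bigr)^N$, the parameter $\alpha_1 = \alpha_1(M,N,\psi,\tau)$ produced by Lemma \ref{hard theorem} depends on $\beta$ through $\psi$, contradicting the corollary's requirement that $\alpha = \alpha(M,N,\tau)$ be fixed \emph{before} $\beta$ is chosen. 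Your suggested escapes --- ``reorder the dependence'' or use a ``crude universal lower bound'' --- do not work: the quantifier structure $\exists\alpha\,\forall\beta\,\exists\mu$ is part of the statement, and there is no uniform positive lower bound on $\alpha\beta$ since $\beta$ ranges over all of $(0,1)$.

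The paper sidesteps all this with a one--line monotonicity observation, no stalling required. Apply Lemma \ref{hard theorem} directly from $U$; it produces the first Black ball $U(i_N)$ with $\rho(U(i_N)) < \rho(U)\mu_N$ satisfying the inequality with $\mu_N$ on the right--hand side. For any $0<\mu' \le \mu_N$, the first Black ball $U(l)$ with $\rho(U(l)) < \rho(U)\mu'$ is automatically contained in $U(i_N)$; hence for every $A \in U(l)$ one has
\[
|\vec{M}_N(A)| \;>\; L\sqrt{L}\,\rho(U)\,\mu_N\, M_{N-1}(U(i_N)) \;\ge\; L\sqrt{L}\,\rho(U)\,\mu'\, M_{N-1}(U(l)),
\]
using $\mu_N \ge \mu'$ and $M_{N-1}(U(i_N)) \ge M_{N-1}(U(l))$ (a maximum over a larger ball). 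No factor $(\alpha\beta)^{-1}$ ever appears. Two minor bookkeeping points you should also fix: the paper takes $\psi = L\sqrt{L}\bigl(\tfrac{2}{\epsilon_0}\bigr)^{L}$ (exponent $L$, not $N$), so that the \emph{same} $\psi$ feeds both lemmas, and then sets $\alpha = \min\{\alpha_1,\alpha_2\}$ and $\mu = \min\{\mu_N,\mu'_M\}$, which is what makes a single pair $(\alpha,\mu)$ serve both conclusions of the corollary simultaneously.
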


\begin{proof}
\noindent Replace $\psi$ in lemmas \ref{hard theorem} and \ref{hard theorem 2} 
with $L\sqrt{L}\left(\frac{2}{\epsilon_{0}}\right)^{L}$. 

\noindent Set 
\begin{center}
$\alpha=\min\{\alpha_{1},\alpha_{2}\}$
and $\mu =\min\{\mu_{N},\mu^{'}_{M}\}$.
\end{center}
\noindent Notice that by lemma \ref{hard theorem} (lemma \ref{hard theorem 2}), 
if 
\begin{center}
$0<\mu '\leq \mu _{N}$\hspace{3mm} \text{similarily}\hspace{3mm} $0<\mu '\leq \mu _{M}$ 
\end{center}

\noindent and

\begin{center}
$\rho(U(i_{N}))<\mu '\rho_{0}$ 
\hspace{3mm} \text{similarily}\hspace{3mm}$\rho(U(i_{N}))<\mu '\rho_{0}$,
\end{center}

\noindent then obviously every $A\in U(i_{N})$ ($A\in U(i_{M})$) will satisfy (\ref{46}). 

\end{proof}

\subsubsection{Two geometric lemmas }

\noindent For what follows we shall need to use 
lemmas 1 and 2 in \cite{SSS}.\\

\noindent Let $\X$, $\Y$, ${\cal{A}}(\X)$ and ${\cal{B}}(\Y)$
be as in (\ref{special X}), (\ref{special Y}), (\ref{special A}) and (\ref{special B}).

\noindent Set 
\begin{equation}
\lambda =N/L .
\label{lambda}
\end{equation}

\noindent Given $1<R\in\mathbb{R}$, let

\begin{equation}
\delta =R^{-NL^2},\hspace{5mm} \delta^T=R^{-ML^2},
\label{delta}
\end{equation}

\begin{lemma}
There exists a constant $R_1=R_1(M,N,\sigma )$ such that for every $i\in \mathbb{N}$ and $R\geq R_1$,\\

\noindent if a ball $B$ satisfies 
\begin{center}
$\rho (B)<R^{-L(\lambda+i)}$, 
\end{center} 
with the system

\begin{center}
$0<\xnm<\delta R^{M(\lambda +i)}$
\end{center} 

\begin{center}
$\Anm<\delta R^{-N(\lambda +i)-M}$
\end{center} 
having no solution $\X$ for all $\A_i$ associated with points in $B$,
then the number of linearly independent vectors $\Y$
satisfying the system

\begin{center}
$0<\ynm <\delta^{T} R^{N(1 +i)}$
\end{center} 

\begin{center}
$\Bnm <\delta^{T} R^{-M(1 +i)-N}$
\end{center}

for all $\B_i$ associated with points in $B$ is at most $N$.
\label{Schmidt first lemma}
\end{lemma}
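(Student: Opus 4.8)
The plan is to follow Schmidt's original argument for Lemma 1 in \cite{SSS}, essentially verbatim, since this is a purely geometric/arithmetic statement about matrices and integer lattice points that makes no reference to the measure $\tau$ at all. The only substantive point is to choose $R_1$ large enough (depending on $M$, $N$, and the diameter parameter $\sigma$) for the quantitative estimates to go through. First I would set up a transference-type argument: suppose, for contradiction, that the first system (on the $\X$'s, with the $\A_i$'s attached to points of $B$) has no solution while more than $N$ linearly independent $\Y$'s solve the second system (on the $\B_i$'s). Since the $\B_j$ are built from the same matrix entries $\gamma_{ij}$ as the $\A_i$ (they are the ``transposed'' linear forms), having $N+1$ linearly independent solutions $\Y$ in $\mathbb{R}^L$ forces, after intersecting the span of these $\Y$'s with a suitable coordinate subspace and clearing denominators, the existence of a nonzero integer vector $\X$ of the shape \eqref{special X}. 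One then estimates $\xnm$ and $\Anm$ for this $\X$ in terms of the $\ynm$ and $\Bnm$ bounds, using Cramer's rule / the adjugate: the coordinates of $\X$ are, up to sign, $L\times L$ (or smaller) subdeterminants of a matrix whose rows are the $\Y$'s, and $\A_i\cdot\X$ is correspondingly a subdeterminant involving the $\B_j\cdot\Y_k$ quantities. The elementary bound $|\det| \le L!\,(\text{max entry})^L$, together with the fact that entries of the $\B_j\cdot\Y$ matrix are controlled by $\sigma\ynm$ off the relevant block and by $\Bnm$ on it, yields upper bounds of exactly the form $\delta R^{M(\lambda+i)}$ and $\delta R^{-N(\lambda+i)-M}$ once $\delta = R^{-NL^2}$ is plugged in; this is where the exponent bookkeeping (and the choice $\lambda = N/L$, $\delta = R^{-NL^2}$) is engineered to absorb the combinatorial constants $L!$, $\sigma$, etc., for $R\ge R_1$.

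The key steps, in order, are: (1) assume the contrapositive and fix $N+1$ linearly independent integer solutions $\Y_1,\dots,\Y_{N+1}$ to the $\Y$-system valid for all $\B_i$ associated to points of $B$; (2) produce from them, by a linear-algebra argument on coordinate projections, a nonzero integer vector $\X$ with $\x=(x_1,\dots,x_N)\ne 0$, whose entries are subdeterminants of the $\Y$-matrix; (3) bound $\xnm$ by $L!\,(\delta^T R^{N(1+i)})^L$ and check $L!\,(\delta^T)^L R^{NL(1+i)} < \delta R^{M(\lambda+i)}$ for $R\ge R_1$ --- i.e.\ compare the exponents $NL(1+i) - ML^2$ versus $M(\lambda+i) - NL^2$ and verify the inequality holds with room to spare for large $R$; (4) similarly bound $\Anm$ using that $\A_i\cdot\X$ expands (by multilinearity of the determinant) into a sum of products of the $\B_j\cdot\Y_k$'s, each at most $\delta^T R^{-M(1+i)-N}$ on the distinguished indices and at most $\sigma\,\delta^T R^{N(1+i)}$ elsewhere, to get $\Anm < \delta R^{-N(\lambda+i)-M}$; (5) conclude that $\X$ solves the first system for all $\A_i$ attached to points of $B$, contradicting the hypothesis. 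Step (3) and (4) are where $R_1 = R_1(M,N,\sigma)$ gets pinned down: it is the smallest $R$ for which all finitely many exponential inequalities of this type are valid.

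I expect the main obstacle to be step (2) --- extracting the integer vector $\X$ of the correct special form \eqref{special X} from the $N+1$ independent $\Y$'s, and making sure it is genuinely nonzero in its first $N$ coordinates (so that $\x\ne 0$), rather than landing in the subspace where the transference is vacuous. This is precisely the geometry-of-numbers core of Schmidt's Lemma 1, and it is handled by a careful choice of which $L\times L$ minor to take; the bookkeeping of which $v\le N$ and which index sets $i_1<\dots<i_v$, $j_1<\dots<j_v$ survive is exactly what the vectors $\vec M_{v,\mathcal Y}(A)$ introduced in Section~2 are designed to track. Everything else is routine: the determinant bounds are textbook, and the exponent comparisons are linear inequalities in $i$ with positive ``slope'' margin, so once they hold at $i=0$ (or for the worst finite case) they hold for all $i\in\mathbb N$. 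Since the statement is identical in spirit to \cite{SSS} Lemma 1 with $\sigma$ now coming from $\mathcal K=\mathrm{supp}(\tau)$ rather than from $U(0)$, the cleanest exposition is to cite Schmidt's computation and only indicate the (trivial) modification in the definition of $\sigma$.
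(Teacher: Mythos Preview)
Your final recommendation --- to simply cite Schmidt's computation in \cite{SSS} and note that $\sigma$ now depends on $\mathcal K$ rather than on $U(0)$ --- is exactly what the paper does: it states Lemmas~\ref{Schmidt first lemma} and~\ref{last lemma} without proof, prefacing them with ``For what follows we shall need to use lemmas 1 and 2 in \cite{SSS}.'' Your detailed transference outline is consistent with Schmidt's argument and not needed here.
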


\begin{lemma}
\label{last lemma}
There exists a constant $R_2=R_{2}(M,N,\sigma )$ such that for every $j\in \mathbb{N}$ and $R\geq R_2$,\\

\noindent if a ball $B$ satisfies 

\begin{center}
$\rho (B)<R^{-L(1+j)}$
\end{center}
with the system 

\begin{center}
$0<\ynm <\delta^{T} R^{N(1 +j)}$
\end{center} 

\begin{center}
$\Bnm <\delta^{T} R^{-M(1 +j)-N}$
\end{center}

having no integer solution $\Y$ for all $\B_i$ associated with points in $B$, 
then the number of linearly independent vectors $\X$
satisfying the system

\begin{center}
$0<\xnm <\delta R^{M(\lambda +i)}$
\end{center} 

\begin{center}
$\Anm <\delta R^{-N(\lambda +i)-M}$
\end{center}

for $i=j+1$ for all $\A_i$ associated with points in $B$ is at most $M$.
\label{Schmidt second lemma}
\end{lemma}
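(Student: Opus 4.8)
The plan is to recognise Lemma \ref{last lemma} as Lemma~2 of \cite{SSS} --- it is the companion of Lemma \ref{Schmidt first lemma}, which is Lemma~1 there --- and to deduce it from the single--matrix transference of \cite{SSS} applied at the centre $A_{0}$ of $B$. The reduction to $A_{0}$ is immediate in both directions: the hypothesis is a statement valid at \emph{every} point of $B$, hence in particular at $A_{0}$, so the system $0<\ynm<\delta^{T}R^{N(1+j)}$, $\Bnm<\delta^{T}R^{-M(1+j)-N}$ has no integer solution $\Y$ of the form (\ref{special Y}) for the forms $\B_{1},\dots,\B_{N}$ attached to $A_{0}$; and conversely every $\X$ of the form (\ref{special X}) satisfying the $\X$--system (with $i=j+1$) for all points of $B$ satisfies it in particular at $A_{0}$, so it is enough to bound the number of linearly independent such $\X$ at $A_{0}$. (The bound $\rho(B)<R^{-L(1+j)}$ is inherited verbatim from \cite{SSS}, where it records that the coordinates of $\A_{i}$ and of $\B_{i}$ which actually depend on the matrix vary by at most $2\rho(B)$ across $B$, so that $\A_{i}\cdot\X$ and $\B_{i}\cdot\Y$ change by at most $2N\rho(B)\xnm$, resp.\ $2M\rho(B)\ynm$; after the reduction above it plays no further role, and $R_{2}=R_{2}(M,N,\sigma)$ need only be chosen large enough to absorb the absolute constants arising in the next paragraph.)

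For the single matrix $A_{0}$ I would argue exactly as in \cite{SSS}. Observe first that, since the ``form'' half--widths $\delta R^{-N(\lambda+i)-M}$ are $<\tfrac12$ for $R\ge R_{2}$, the map $\X\mapsto\x$ is injective on the set of integer $\X$ in the $\X$--parallelepiped (the form conditions determine $x_{N+1},\dots,x_{L}$ from $\x$), so \emph{a priori} that set contains at most $N$ linearly independent vectors; the point is to improve $N$ to $M$, and this is where the hypothesis enters. Pass to the standard unimodular change of variables built from $A_{0}$; the $\X$--system (with $i=j+1$) and the conjugate $\Y$--system then cut out parallelepipeds $P$ and $Q$ in $\mathbb{R}^{L}$, with respect to the unimodular lattice attached to $A_{0}$ and its dual, and --- because $i=j+1$ and by the exact exponents fixed in (\ref{lambda}) and (\ref{delta}) --- $Q$ and the polar of $P$ agree up to rescalings of their directions by powers of $R$. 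The hypothesis says $Q$ contains no nonzero point of the dual lattice, i.e.\ its first successive minimum exceeds $1$; feeding this into Minkowski's second theorem together with Mahler's inequalities for compound (exterior--power) convex bodies converts it into an upper bound of exactly $M$ on the number of successive minima of $P$ that are $\le1$, that is, on the number of linearly independent integer $\X$ in $P$. Combined with the reduction to $A_{0}$, this is the assertion of the lemma.

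The hard part is this last step --- Mahler's compound--convex--body machinery, and especially the numerology that makes the bound come out to precisely $M$ and not larger, with the four exponents $M(\lambda+i)$, $-N(\lambda+i)-M$, $N(1+j)$, $-M(1+j)-N$ together with $\delta=R^{-NL^{2}}$ and $\delta^{T}=R^{-ML^{2}}$ all fitting together. But this is classical: it is Schmidt's Lemma~2 of \cite{SSS}, established by the same circle of ideas as his Lemma~1 (our Lemma \ref{Schmidt first lemma}). Consequently the only genuinely new point is the passage from a single matrix to the whole ball $B$, and, as explained in the first paragraph, that costs nothing: the hypothesis is \emph{a fortiori} a statement about the centre of $B$, and the conclusion over all of $B$ follows from the conclusion at the centre.
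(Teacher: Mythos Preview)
Your proposal is correct and matches the paper's treatment: the paper does not prove this lemma at all but simply states it as Lemma~2 of \cite{SSS} (see the sentence ``For what follows we shall need to use lemmas~1 and~2 in \cite{SSS}'' introducing the two geometric lemmas). Your reduction to the centre $A_{0}$ and sketch of the transference/Mahler argument are extra commentary beyond what the paper provides, but the substance---deferral to Schmidt---is identical.
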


\subsubsection{One last lemma}

\noindent We remind that $\rho=\rho(U(0))$ is the first radius chosen by player Black,
and we assume $N\geq M$.

\begin{lemma}
\label{last lemma}
Set $\alpha $ as in lemma \ref{corollary of main lemma}, and given
$0<\beta <1$, let $\mu$ be as in lemma \ref{corollary of main lemma} and $\lambda$ as in (\ref{lambda}).
Then there exists $R=R(M,N,\alpha, \beta, \rho, \tau)$
such that 
player White can direct the game in such a way that for every $i,k\in\mathbb{N}$,  
if $U(k)$ of the game satisfies 
\begin{equation}
\rho (U(k))<R^{-L(\lambda+i)}, 
\label{15}
\end{equation} 
then for all $A\in U(k)$ the system

\begin{equation}
0<\|\x\|_{\infty}<\delta R^{M(\lambda +i)}
\label{16}
\end{equation}

\begin{equation}
\|{\cal{A}}(\X)\|_{\infty}<\delta R^{-N(\lambda +i)-M}
\label{17}
\end{equation}
 
\noindent has no solution $\X$ as in (\ref{special X}), where $\delta$ is as in (\ref{delta}). 

\noindent He can also direct the game such that
for every $i,h\in\mathbb{N}$ 
if $U(h)$ satisfies
 
\begin{equation}
\rho (U(h))<R^{-L(1+i)}, 
\label{18}
\end{equation} 
then for all $A\in U(h)$ the system

\begin{equation}
0<\|\y\|_{\infty}<\delta^{T} R^{N(1 +i)}
\label{19}
\end{equation}

\begin{equation}
\|{\cal{B}}(\Y)\|_{\infty}<\delta^{T} R^{-M(1 +i)-N}.
\label{20}
\end{equation} 

\noindent has no solution $\Y$ as in (\ref{special Y}), where $\delta^{T}$ is as in (\ref{delta}).\\\\
\end{lemma}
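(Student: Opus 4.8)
\section*{Proof proposal for Lemma \ref{last lemma}}

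The plan is to run the two families of geometric lemmas (Lemma \ref{Schmidt first lemma} and Lemma \ref{last lemma}, i.e. Schmidt's Lemmas 1 and 2) in tandem with Corollary \ref{corollary of main lemma}, choosing $R$ large in terms of $M,N,\alpha,\beta,\rho,\tau$ and then arguing by induction on the stage index. First I would fix $R_0=\max\{R_1,R_2\}$ from the two geometric lemmas and also demand $R$ large enough that the ``radius shrinking'' guaranteed by Corollary \ref{corollary of main lemma} is compatible with the scales $R^{-L(\lambda+i)}$ and $R^{-L(1+i)}$: concretely, whenever Black reaches a ball $U$ with $\rho(U)<1$, White can, using the strategy from Corollary \ref{corollary of main lemma} with some fixed $\mu'\le\mu$ chosen as a suitable negative power of $R$, force a later Black ball $U(l)$ with $\rho(U(l))<\rho(U)\mu'$ on which $|\vec M_N(A)|$ (resp. $|\vec M'_M(A)|$) is bounded below by $L\sqrt L\,\rho(U)\mu'\,M_{N-1}(U(l))$ for all $A$ in that ball.

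The heart of the argument is a bootstrapping between the two systems. Suppose inductively that White has already guaranteed that whenever $\rho(U(h))<R^{-L(1+i)}$ the $\Y$-system \eqref{19}--\eqref{20} has no solution for all $A\in U(h)$ (the base case being handled by taking $R$ large so that the first relevant ball is small enough that the trivial bound forces no small solutions, using compactness of ${\cal K}$ and $\sigma=\sigma(\tau)$). Then, applying Lemma \ref{Schmidt second lemma} with $j=i-1$, on any ball $B$ of radius $<R^{-L(1+i-1)}$ for which the $\Y$-system has no integer solution, the number of linearly independent $\X$ solving the $\X$-system \eqref{16}--\eqref{17} at scale $i$ is at most $M$. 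Now the lower bound on $|\vec M_N|$ from Corollary \ref{corollary of main lemma} (applied with the orthonormal frame ${\cal Y}$ chosen adapted to the relevant subspace) forces that there cannot be $N$ linearly independent such $\X$ on the ball White drives the game into — this is exactly the determinant being bounded away from zero relative to the lower-order minors $M_{N-1}$, which translates via the geometry in Lemmas \ref{Schmidt first lemma}--\ref{last lemma} into nonexistence of the $\X$-solution outright at scale $i$ on $U(k)$ once $\rho(U(k))<R^{-L(\lambda+i)}$. Symmetrically, feeding this back through Lemma \ref{Schmidt first lemma} upgrades the $\X$-statement at scale $i$ into the $\Y$-statement at scale $i+1$, closing the induction. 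The two ``Alternatively'' clauses of Corollary \ref{corollary of main lemma} are precisely what let White alternate between killing $\X$-solutions and $\Y$-solutions.

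The bookkeeping to watch is that Corollary \ref{corollary of main lemma} only tells White how to shrink from a ball of radius $<1$ down by a factor $\mu'$, while the geometric lemmas need the radius to have crossed the specific threshold $R^{-L(\lambda+i)}$ (resp. $R^{-L(1+i)}$); one must choose $R$ so that iterating the shrinking a bounded number of times per stage lands inside the needed window, and so that the constant $L\sqrt L$ in \eqref{46}, \eqref{46a} together with the definitions \eqref{lambda}, \eqref{delta} of $\lambda,\delta,\delta^T$ makes the minor inequalities quantitatively imply the no-solution statements (this is where the exponent $NL^2$, $ML^2$ in \eqref{delta} is calibrated). The main obstacle I expect is this calibration: verifying that the lower bound $|\vec M_\nu(A)|>L\sqrt L\,\rho(U)\mu'\,M_{\nu-1}(U(l))$ for the top minor, combined with trivial upper bounds on the entries $\A_i\cdot\X$, $\B_i\cdot\Y$ (which are themselves controlled by $\sigma(\tau)$, $\|\x\|_\infty$ and the ball radius), really does rule out a solution at the claimed scale — essentially reproving the passage in Schmidt's \cite{SSS} that derives his ``no solution'' conclusion from the nonvanishing determinant, but now with our measure-theoretic $\vec M_\nu$ bounds in place of his pointwise ones. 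Once the constants are pinned down, the induction itself is routine, and $R=R(M,N,\alpha,\beta,\rho,\tau)$ emerges from finitely many ``$R$ large enough'' conditions.
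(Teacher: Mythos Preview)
Your overall architecture matches the paper's: choose $R$ large (including $R>\max\{R_1,R_2\}$ and further conditions involving $\alpha,\beta,\mu,\rho$), then induct, alternating between the $\X$- and $\Y$-systems via the two geometric lemmas and the two clauses of Corollary~\ref{corollary of main lemma}. Two points in the execution need correcting, though.

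First, the base case. The paper starts the induction on the $\X$-side at $i=0$, and the justification is purely arithmetic: any nonzero integer vector has $\|\x\|_\infty\ge 1$, while $\delta R^{M\lambda}=R^{-NL^2+N}<1$, so \eqref{16} is already vacuous. Compactness of ${\cal K}$ and $\sigma(\tau)$ play no role in getting the induction started.

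Second, and more substantively, you have the mechanism by which the determinant bound kills solutions inverted. You write that the lower bound on $|\vec M_N|$ ``forces that there cannot be $N$ linearly independent such $\X$.'' That is not how the step works. The geometric lemma (Lemma~\ref{Schmidt first lemma}) is what confines the possible $\Y$-solutions to a subspace of dimension at most $N$; one then \emph{chooses} an orthonormal basis $\Y_1,\dots,\Y_N$ of that subspace and writes any putative solution as $\Y=\sum t_j\Y_j$. The inequalities \eqref{20} become a linear system in the $t_j$ with coefficient matrix $(\B_u\cdot\Y_v)$, and Cramer's rule, combined with the lower bound \eqref{33} on $\max|t_j|$, gives
\[
|D|\;\le\; N\sqrt{N}\,R^{-L(1+i)}\,\max_{u,v}|D_{uv}|,
\]
where $D=\det(\B_u\cdot\Y_v)$ and the $D_{uv}$ are cofactors. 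This is an \emph{upper} bound on $|D|=|\vec M_N(A)|$ in terms of $M_{N-1}$. White's job, via Corollary~\ref{corollary of main lemma} with $\mu'\rho_0=R^{-L(1+i)}$ (one checks $\mu'<\mu$ from the condition $R>(\alpha\beta\mu)^{-1/M}$), is to force the opposite inequality $|\vec M_N(A)|>L\sqrt{L}\,R^{-L(1+i)}M_{N-1}$ on the target ball, yielding a contradiction and hence \emph{no} solution at all. So the passage from determinant bound to ``no solution'' is a Cramer's-rule contradiction, not a rank obstruction, and it is more than calibration of constants. In the dual step it is the ``Alternatively'' clause with $|\vec M'_M|$ that is played against the at-most-$M$-dimensional space of $\X$-solutions coming from Lemma~\ref{Schmidt second lemma}; in your paragraph $\vec M_N$ and $\vec M'_M$ are interchanged.
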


\begin{proof}

\noindent In order for lemmas \ref{Schmidt first lemma} and \ref{Schmidt second lemma}
to be applicable, we first demand that 

\begin{center}
$R>max\{R_{1},R_{2}\}$,
\end{center}

\noindent where $R_{1}$ and $R_{2}$ are as defined
in lemmas \ref{Schmidt first lemma} and \ref{Schmidt second lemma}.

\noindent Next, let 
\begin{equation}
\label{nested}
R>max\{\rho^{-\frac{1}{L\lambda}}, (\alpha \beta)^{-\frac{1}{M}}\}.
\end{equation}

\noindent Condition (\ref{nested}) ensures that 
$\rho(U(k_{0}))< \rho$, and the sequence  
\begin{equation}
U(k_{0})\supset U(h_{0})\supset U(k_{1})\supset U(h_{1})\supset\ldots
\label{21}
\end{equation}
is strictly decreasing.

\noindent Finally we demand that   
\begin{equation}
\label{condition on mu}
R>(\alpha\beta\mu)^{-\frac{1}{M}}
\end{equation}

\noindent Let $U(k_{i})$ be the \textbf{first} ball of the game with 
(\ref{15}), and $U(h_{i})$ for the
\textbf{first} ball with (\ref{18}).

\noindent We shall prove the lemma by induction on $i$.

\noindent 1. \underline{base of the induction}.

\noindent We notice that for $i=0$

\begin{center}
$\|\x\|\geq 1>\delta R^{M\lambda}$.
\end{center}

Therefore (\ref{16}) and (\ref{17}) have no solution $\X$ if $A\in U(k_{0})$.

\underline{Induction hypothesis}.
 
\noindent We assume

\begin{center}
$U(0)\supset W(0)\supset\ldots\supset U(k_{0})\supset\ldots \supset U(k_{i})$
\end{center}

have been already chosen such that for every $0\leq j\leq i$, 
(\ref{16}) and (\ref{17}) have no solution for $A\in U(k_{j})$,
and dually we assume that 

\begin{center}
$U(0)\supset W(0)\supset\ldots\supset U(k_{0})\supset\ldots \supset U(k_{i})\supset\ldots\supset U(h_{i})$
\end{center}

have been already chosen such that for every $0\leq j\leq i$,
(\ref{19}) and (\ref{20}) have no solution for $A\in U(h_{j})$.

\vspace{10mm}

Thus it remains to prove that if

\begin{center}
$U(0)\supset W(0)\supset\ldots\supset U(k_{0})\supset\ldots \supset U(k_{i})$
\end{center}

have been already chosen such that for every $0\leq j\leq i$, 
(\ref{16}) and (\ref{17}) have no solution for $A\in U(k_{j})$, player White
can enforce that (\ref{19}) and (\ref{20}) have no solution if $A\in U(h_{i})$.

\noindent Suppose that there are solutions $\Y$ of (\ref{19}) and (\ref{20}) 
with vectors $\B_1,...,\B_N$ associated 
with a point $A$ in $U(k_{i})$.
By our assumptions it is sufficient to consider points $\Y$ satisfying 

\begin{equation}
\delta ^{T}R^{N(1+i-1)}\leq \ynm <\delta ^{T}R^{N(1+i)}.
\label{33}
\end{equation}  

\noindent Thus in particular 
\begin{center}
$\delta ^{T}R^{N(1+i-1)}\leq \|\Y\|_{\infty}$.
\end{center}
By Lemma \ref{Schmidt first lemma}, the vectors $\Y$ will be contained in an $N$-dimensional subspace of $\RL$.
Let $\Y_1,...,\Y_N$ be an orthonormal basis of this subspace and
suppose that the integer point $\Y=t_1\Y_1+...+t_N\Y_N$ satisfies (\ref{20}) and (\ref{33}).

We have that 

\begin{center}
$\delta ^{T}R^{N(1+i-1)}\leq \|\Y\|_{\infty}\leq \left|\Y\right|=\sqrt{t_{1}^{2}+\ldots +t_{N}^{2}}\leq
\sqrt{N}max(\left|t_1 \right|,...,\left|t_N\right|)$.
\end{center}

\noindent And so,

\begin{equation}
\frac{1}{\sqrt{N}}R^{N(1+i-1)}\leq max(\left|t_1 \right|,...,\left|t_N\right|),
\label{35}
\end{equation}

\noindent and

\begin{center}
$\begin{matrix}
\left|t_1(\B_1\cdot\Y_1)+...+t_N(B_1\cdot\Y_N)\right|<\delta ^{T}R^{-M(1+i)-N}\\
.\\
.\\
.\\
\left|t_1(\B_N\cdot\Y_1)+...+t_N(B_N\cdot\Y_N)\right|<\delta ^{T}R^{-M(1+i)-N}.
\end{matrix}$
\end{center}

Let $D$ be the determinant of $(\B_u\cdot\Y_v)_{1\leq u,v\leq N}$, 
and let $D_{uv}$ be the cofactor of $\B_u\cdot\Y_v$
in this determinant.

By Cramer's rule we get for every $1\leq\nu\leq N$

\begin{equation}
\left|t_{v}D\right|\leq N\delta ^{T}R^{-M(1+i)-N}max(\left|D_{1v}\right|,...,\left|D_{Nv}\right|)
\label{cramer}
\end{equation}

and in conjunction with (\ref{35}) we get

\begin{equation}
\left|D\right|\leq N\sqrt{N}R^{-L(1+i)}max(\left|D_{11}\right|,
\left|D_{12}\right|,...,\left|D_{NN}\right|).
\label{last equation}
\end{equation}

\noindent Player White's strategy is to play in such a way 
such that (\ref{last equation}) \underline{is not satisfied} by any $\B_1,...,\B_N$ 
associated with a point $A\in U(h_{i})$. \\

\noindent Set $\rho_{0}=\rho(U(k_{i}))$ and let $0<\mu '$ be chosen  
to satisfy
\begin{equation} 
\mu ^{'}\rho_{0}=R^{-L(1+i)}.
\label{first condition}
\end{equation}

\noindent Notice that by definition,
\begin{center}
$\alpha\beta R^{-L(\lambda +i)}\leq\rho_{0}<R^{-L(\lambda+i)}$.
\end{center}

\noindent and it follows by condition (\ref{condition on mu}) that

\begin{equation}
\mu '<(\alpha\beta)^{-1}R^{L(\lambda+i)-L(1+i)}=(\alpha\beta)^{-1}R^{-M}<\mu.
\label{second condition}
\end{equation}

\noindent Applying lemma \ref{corollary of main lemma},  
player White can enforce the first ball $U(i_{N})=U(h_{i})$ with
\begin{center}
$\rho(U(h_{i}))<\rho_{0}\mu ' =R^{-L(1+i)}$
\end{center}
to satisfy for every $A\in U(i_{N})$ 
\begin{center}
$|\vec{M}_{N}(A)|>L\sqrt{L}\rho_{0}\mu ' M_{N -1}U(i_{N})$.
\end{center}

\noindent Thus for every $A\in U(h_{i})$\\

$|D|=|\vec{M}_{N}(A)|>L\sqrt{L}R^{-L(1+i)}M_{N -1}U(h_{i})>$\\

$> N\sqrt{N}R^{-L(1+i)}max(\left|D_{11}\right|,
\left|D_{12}\right|,...,\left|D_{NN}\right|)$,

\noindent and so (\ref{last equation}) is not satisfied by any $\B_1,...,\B_N$ 
associated with a point $A\in U(h_{i})$.

One can show in almost the same way that if $U({h_{i}})$
has already chosen such that  
(\ref{19}) and (\ref{20}) have no solution for $A\in U(h_{i})$, player White
can enforce $U(k_{i+1})$ to satisfy that for no $A\in U(k_{i+1})$ the system (\ref{16}) and (\ref{17})
has no solution.\\
\end{proof}

\subsubsection{Proof of Theorem \ref{main theorem}}
\begin{proof}

\noindent Let $\alpha$ be as defined in lemma \ref{last lemma}
and let $0<\beta <1$. Once player Black chooses his initial radius $\rho$ for his first ball $U(0)$, 
$R$ as defined in lemma \ref{last lemma} could be chosen by player White.

\noindent Let $\X$ be as defined in (\ref{special X}), i.e.
$\X\in\mathbb{Z}^{L}$ and
 
\begin{center}
$\X=(x_1,...,x_N,...,x_L) : \x=(x_1,...,x_N)\neq (0,...,0)$.
\end{center}

\noindent Then for some $i\in \mathbb{N}$,

\begin{center}
$\delta R^{M(\lambda +i-1)}\leq \|\x\|_{\infty}<\delta R^{M(\lambda +i)}$.
\end{center}

\noindent By lemma \ref{last lemma},
player White can direct the game in such a way that 
if $U(k_{i})$ of the game satisfies 
\begin{center}
$\rho (U(k_{i}))<R^{-L(\lambda+i)}$, 
\end{center} 
then for all $A\in U(k_{i})$

\begin{center}
$\|{\cal{A}}(\X)\|_{\infty}\geq \delta R^{-N(\lambda +i)-M}$
\end{center}

\noindent Successively applying lemma \ref{last lemma} to ever increasing $i$, player White can direct
the game such that $A=\bigcap_{i=0}^{\infty}U(k_{i})$ will satisfy for every 
$\X$ as defined in (\ref{special X}) 

\begin{center}
$(\|\x\|_{\infty})^{N}(\|{\cal{A}}(\X)\|_{\infty})^{M}\geq \delta^{L}R^{-NM-M^2}$.
\end{center}

\noindent Recalling (\ref{BA def}) we are done, letting 
\begin{center}
$0<C<\delta^{L}R^{-NM-M^2}$.
\end{center}

\end{proof}

\subsection{Proof of lemma \ref{hard theorem}}

\subsubsection{Preliminaries}
\noindent For the rest of this subsection, we shall need the following notation.\\

\noindent Let $\sigma, \psi , \mu >0$ and suppose that $\nu\in\mathbb{N}$ 
with $0\leq\nu\leq N$. 
Let $U\subset\mathbb{R}^{H}$ be a closed ball
and we denote 
$\rho(U)=\rho_{0}$. 

\noindent We say that $(U, B, \sigma ,M ,N,  \psi , \mu , \nu )$ 
satisfy (*) if 
\begin{enumerate}
\item $\rho_{0}<1$.
\item For every $A\in U$, $|A|\leq \sigma$.
\item $B$ is a closed ball such that $B\subset U$. 
\item $\rho(B)<\mu\rho_{0}$.

\item For any given  
$\Y_{1},...,\Y_{N}$ orthonormal vectors in $\mathbb{R}^{L}$ we have 
for every $A\in B$, 
\begin{center}
$|\vec{M}_{\nu -1}(A)|>\psi\rho_{0}\mu M_{\nu -2}(B)$.
\end{center}
\end{enumerate}

\noindent The next three propositions are proved by Schmidt. See
lemma 5, corollary 1, 
corollary 2 and lemma 6 in \cite{SSS}.\\

\begin{pro}
\label{lemma 5}
Suppose $(U, B, \sigma ,M ,N,  \psi , \mu , \nu )$ 
satisfy (*). There exists $C_{1}=C_{1}(M,N)$ such that for any $\epsilon >0$ 
if $U^{'}$
is a ball contained in $B$ satisfying
\begin{equation}
\rho(U^{'})<\epsilon C_{1}\rho(B),
\label{equation of lemma 5}
\end{equation}
then for any $A^{'}$ and $A^{''}$ in $U^{'}$
\begin{center}
$|\vec{M}_{\nu -1}(A^{'})-\vec{M}_{\nu -1}(A^{''})|<\epsilon\rho_{0}\mu M_{\nu -2}(B)$.
\end{center}

\noindent Furthermore, if
\begin{equation}
\label{50}
\rho({U^{'}})<\frac{1}{2}\psi C_{1}\rho(B)
\end{equation}
then for every $A\in U^{'}$,
\begin{center}
$|\vec{M}_{\nu -1}(A)|\geq \frac{1}{2}M_{\nu -1}(U^{'})$.
\end{center}
\end{pro}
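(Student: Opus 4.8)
The plan is to prove the two assertions of Proposition \ref{lemma 5} by exploiting the fact that each coordinate of $\vec{M}_{\nu -1}(A)$ is a polynomial function of the entries of $A$ of degree at most $\nu -1 \leq N \leq L$, so that $\vec{M}_{\nu-1}$ is Lipschitz on any bounded region, with a Lipschitz constant that can be taken uniform once we know $|A|\le\sigma$ for $A\in U$ (condition (*)(2)). First I would fix $A', A''\in U'$ and write $\vec{M}_{\nu-1}(A')-\vec{M}_{\nu-1}(A'')$ componentwise; by the mean value theorem applied along the segment from $A''$ to $A'$ (which lies in $U\subset\mathbb{R}^H$, on which all entries are bounded by $\sigma$), each component is bounded by $\mathrm{Lip}\cdot|A'-A''|\le \mathrm{Lip}\cdot 2\rho(U')$, where $\mathrm{Lip}$ depends only on $M,N$ and $\sigma$. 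Absorbing the $\sigma$-dependence into a constant $C_1=C_1(M,N)$ (using the convention of the Remark that constants involving $\sigma$ are viewed as constants involving $\tau$, and more simply that $\sigma$ is fixed by $\mathcal{K}$), I get $|\vec{M}_{\nu-1}(A')-\vec{M}_{\nu-1}(A'')| \le C_1^{-1}\rho(U')\cdot(\text{something})$.

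The point that makes this clean is to bring in $M_{\nu-2}(B)$ as the natural normalizing scale: the components of $\vec{M}_{\nu-1}$ are, up to sign, determinants built from the $\B_{i}\cdot\Y_{j}$ (or the analogous vectors), and one can estimate the partial derivatives of these degree-$(\nu-1)$ determinants in terms of the $(\nu-2)$-minors, hence in terms of $M_{\nu-2}(B)$ when $A$ ranges over $B$ — this is exactly the content of the cited Schmidt lemmas (lemma 5 and its corollaries in \cite{SSS}). Concretely, I expect a bound of the shape
\begin{equation}
|\vec{M}_{\nu-1}(A')-\vec{M}_{\nu-1}(A'')| \;\le\; \frac{\rho(U')}{C_1\,\rho(B)}\cdot \rho(B)\cdot M_{\nu-2}(B)\cdot c(M,N),
\end{equation}
and then condition (*)(4), $\rho(B)<\mu\rho_0$, converts the factor $\rho(B)M_{\nu-2}(B)$ into $\mu\rho_0 M_{\nu-2}(B)$ (up to the harmless constant, re-absorbed into $C_1$). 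Imposing the hypothesis $\rho(U')<\epsilon C_1\rho(B)$ then yields exactly $|\vec{M}_{\nu-1}(A')-\vec{M}_{\nu-1}(A'')|<\epsilon\rho_0\mu M_{\nu-2}(B)$, which is the first claim.

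For the second claim I would combine the first with hypothesis (*)(5). Pick $A''\in U'$ realizing the maximum $M_{\nu-1}(U')=|\vec{M}_{\nu-1}(A'')|$, and let $A\in U'$ be arbitrary. Using (*)(5) with $A''\in B$ we have $|\vec{M}_{\nu-1}(A'')|>\psi\rho_0\mu M_{\nu-2}(B)$; applying the first part with $\epsilon=\tfrac12\psi$ (permissible since $\rho(U')<\tfrac12\psi C_1\rho(B)$ by \eqref{50}) gives $|\vec{M}_{\nu-1}(A)-\vec{M}_{\nu-1}(A'')|<\tfrac12\psi\rho_0\mu M_{\nu-2}(B) < \tfrac12|\vec{M}_{\nu-1}(A'')| = \tfrac12 M_{\nu-1}(U')$, whence $|\vec{M}_{\nu-1}(A)|\ge |\vec{M}_{\nu-1}(A'')| - \tfrac12 M_{\nu-1}(U') = \tfrac12 M_{\nu-1}(U')$, as desired.

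The main obstacle is the bookkeeping of the multivariate Lipschitz/derivative estimate for the minor vector $\vec{M}_{\nu-1}$: one has to check that differentiating a $(\nu-1)\times(\nu-1)$ determinant of linear-in-$A$ entries, over a region where $|A|\le\sigma$, produces exactly a bound of the form $c(M,N)\cdot M_{\nu-2}(B)$ (cofactor expansion gives $(\nu-2)$-minors of entries of size $O(\sigma)$, so this is where the uniformity in $\sigma$, and the normalization by $M_{\nu-2}$ rather than by $1$, is essential). Since this is precisely lemma 5 and corollaries 1–2 of \cite{SSS}, adapted to our $\mathcal{K}$-game setting where $\sigma$ is determined by $\mathrm{supp}(\tau)$ rather than by $U(0)$, I would cite those results and only indicate the substitutions, rather than redo the computation.
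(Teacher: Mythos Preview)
Your proposal is correct and matches the paper's approach: the paper does not give its own proof of this proposition but simply cites lemma~5 and corollaries~1--2 of \cite{SSS}, and your sketch is a faithful outline of Schmidt's argument there (cofactor expansion to bound the gradient of each coordinate of $\vec{M}_{\nu-1}$ by $c(M,N)M_{\nu-2}(B)$, then the mean value theorem together with $\rho(B)<\mu\rho_0$, and finally the triangle-inequality argument with $\epsilon=\tfrac{1}{2}\psi$ for the second claim). One small clean-up: the Lipschitz constant does not actually need a $\sigma$-dependence at all, since the partial derivatives are bounded directly by $(\nu-2)$-minors and hence by $M_{\nu-2}(B)$; you observed this yourself in the second paragraph, so just drop the $\sigma$ from the first paragraph.
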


\noindent Before formulating the next two propositions we need the following notation.

\noindent Given $\Y_{1},...,\Y_{N}$ orthonormal vectors 
in $\mathbb{R}^{L}$ and $A\in\RH$ let

\begin{center}
$D_{\nu}(A)=D(\B_{1},...,\B_{\nu})=\det
\left(
\begin{matrix}
\B_{1}\cdot\Y_{1} & . & . & . & \B_{1}\cdot\Y_{\nu}\\
. & . & . & .& . \\
. & . & . & .& . \\
. & . & . & .& . \\
\B_{\nu}\cdot\Y_{1} & . & . & . & \B_{\nu}\cdot\Y_{\nu}
\end{matrix}
\right)$.
\end{center} 

\noindent Thus for every $0\leq \nu\leq N$, 
$D_{\nu}$ is a real polynomial function, $D_{\nu}:\mathbb{R}^{H}\rightarrow \mathbb{R}$
of bounded total degree less than or equal to $N$, and in particular, less than $L$.

\begin{pro}
\label{cor 2}
Suppose $(U, B, \sigma ,M ,N,  \psi , \mu , \nu )$ 
satisfy (*). There exists $C_{2}=C_{2}(M,N)$ such that for any
$\epsilon>0$, if $U^{'}$ satisfies (\ref{equation of lemma 5}), 
then for every $A^{'}$ and $A^{''}$ in $U^{'}$
\begin{center}
$|\nabla D_{\nu}(A^{'})-\nabla D_{\nu}(A^{''})|<C_{2}\epsilon\rho_{0}\mu M_{\nu -2}(B)$.
\end{center}
\end{pro}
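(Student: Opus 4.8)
\textbf{Plan for proving Proposition \ref{cor 2}.}
The statement is the ``gradient version'' of Proposition \ref{lemma 5}: it asserts Lipschitz-type control on $\nabla D_\nu$ over a sufficiently small ball $U'\subset B$, with the same scale $\epsilon\rho_0\mu M_{\nu-2}(B)$ that governed the oscillation of $\vec M_{\nu-1}$ itself. The plan is to exploit the fact that $D_\nu$ is one of the (signed) entries of the vector $\vec M_{\nu-1}(A)$ (up to the choice of ordering), so that $D_\nu$ is a polynomial of total degree at most $N<L$ in the $H$ coordinates of $A$, with coefficients built from the fixed orthonormal $\Y_1,\dots,\Y_N$; hence $\nabla D_\nu$ has entries that are polynomials of total degree at most $N-1$, and $\nabla^2 D_\nu$ has entries that are polynomials of total degree at most $N-2$, all with coefficients bounded by an absolute constant depending only on $M,N$ (since $|\Y_i|=1$).

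The key steps, in order: First, I would fix the ball $U'$ satisfying \eqref{equation of lemma 5}, pick $A',A''\in U'$, and apply the mean value inequality coordinatewise to the components of $\nabla D_\nu$, obtaining $|\nabla D_\nu(A')-\nabla D_\nu(A'')|\le \sqrt H\,\|\nabla^2 D_\nu\|_{U'}\,|A'-A''|\le 2\sqrt H\,\|\nabla^2 D_\nu\|_{U'}\,\rho(U')$. Second, I would bound $|A'-A''|\le 2\rho(U')< 2\epsilon C_1\rho(B)$ using \eqref{equation of lemma 5}. Third — and this is the step that does the real work — I would bound the operator norm $\|\nabla^2 D_\nu(A)\|$ for $A\in U'$ in terms of $M_{\nu-2}(B)$. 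The point is that $D_\nu$ is linear in each of the rows $\B_1,\dots,\B_\nu$ separately, so a second partial derivative of $D_\nu$ with respect to two coordinates of $A$ is, up to sign and a bounded combinatorial factor, a minor of size $\nu-2$ of the matrix $(\B_u\cdot\Y_v)$ — precisely the type of quantity measured by $\vec M_{\nu-2}$ — with the missing two rows replaced by constant vectors coming from the ``1''-entries in the $\A$/$\B$ construction. Expanding such a minor along those replaced rows expresses it as a bounded linear combination of components of $\vec M_{\nu-2}(A)$ evaluated at $A$, whence $\|\nabla^2 D_\nu(A)\|\le c(M,N)\,M_{\nu-2}(B)$ for all $A\in B$, because $M_{\nu-2}(B)=\max_{A\in B}|\vec M_{\nu-2}(A)|$ by definition and $U'\subset B$. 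Combining the three bounds gives $|\nabla D_\nu(A')-\nabla D_\nu(A'')|\le c(M,N)\,C_1\,\epsilon\,\rho(B)\,M_{\nu-2}(B)$, and then I would use property (*) item 4, $\rho(B)<\mu\rho_0$, to replace $\rho(B)$ by $\mu\rho_0$, absorbing all constants into the desired $C_2=C_2(M,N)$.

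The main obstacle is the third step: carefully identifying the second derivatives of $D_\nu$ with $(\nu-2)$-sized minors of $(\B_u\cdot\Y_v)$ and verifying that the ``constant rows'' that appear contribute only a bounded multiplicative factor (independent of the $\Y_i$, which is where orthonormality and $|\Y_i|=1$ are used). This is essentially bookkeeping with multilinearity of the determinant and cofactor expansion, exactly parallel to what Schmidt does in the proof of Lemma 6 and Corollary 2 of \cite{SSS}; indeed, since $D_\nu$ and its derivatives are purely algebraic in $A$ and depend on $\tau$ and the game only through the ball $B$, the argument is word-for-word Schmidt's, and no new input about absolutely friendly measures is needed here — that enters only later, in the proof of Lemma \ref{hard theorem}. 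Thus I would present this proof briefly, citing \cite{SSS} for the combinatorial estimate and supplying only the reduction via the mean value inequality and \eqref{equation of lemma 5}.
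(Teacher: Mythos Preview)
Your approach is correct and is essentially the argument in Schmidt \cite{SSS}; note that the paper itself does not give a proof of this proposition but simply cites Schmidt (``The next three propositions are proved by Schmidt. See lemma 5, corollary 1, corollary 2 and lemma 6 in \cite{SSS}''). One small slip: you write that $D_\nu$ is an entry of $\vec M_{\nu-1}(A)$, but it is of course an entry of $\vec M_{\nu}(A)$ (a $\nu\times\nu$ determinant); your subsequent bookkeeping is nonetheless correct, since you rightly identify the second partial derivatives of $D_\nu$ with bounded linear combinations of $(\nu-2)\times(\nu-2)$ minors, i.e.\ with components of $\vec M_{\nu-2}$, which is exactly what is needed.
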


\begin{pro}
\label{lemma 6}
Suppose $(U, B, \sigma ,M ,N,  \psi , \mu , \nu )$ 
satisfy (*). There exist $C_{3}=C_{3}(N,\psi)$ and $C_{4}=C_{4}(M,N,\sigma)$ such that 
if $U{'}$ is a ball contained in $B$ satisfying (\ref{50}) 
and $A\in U^{'}$ with 
\begin{equation}
\label{54}
|\vec{M}_{\nu}(A)|<C_{3}\psi M_{\nu -1}(U^{'})
\end{equation}

\noindent and $D_{\nu -1}(A)$
has the largest absolute value among the coordinates of $\M(A)$ then

\begin{equation}
\label{56}
|\nabla D_{\nu}(A)|>C_{4}M_{\nu -1}(U^{'}).
\end{equation}
\end{pro}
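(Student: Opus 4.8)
\textbf{Proof plan for Proposition \ref{lemma 6}.}

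The plan is to reproduce Schmidt's argument from the proof of lemma 6 in \cite{SSS}, working directly with the polynomial $D_{\nu}$ and the gradient estimate it must satisfy. First I would recall the determinant expansion that defines $\vec{M}_{\nu}(A)$: each of its coordinates is (up to sign) the determinant of a $\nu\times\nu$ submatrix $(\B_{i_k}\cdot\Y_{j_l})$, and by expanding such a determinant along a row one sees that each coordinate is a linear combination, with coefficients that are coordinates of $\M(A)=\vec{M}_{\nu-1}(A)$, of inner products $\B_{i}\cdot\Y_{j}$; moreover these very inner products appear, up to sign and reindexing, among the partial derivatives $\partial D_{\nu}/\partial\gamma$. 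This is the standard cofactor/Laplace identity: $D_{\nu}(A)=\sum_{u}(\B_{u}\cdot\Y_{v})(-1)^{u+v}(\text{minor})$, and the minors are exactly (signed) coordinates of $\M(A)$. Hence $\nabla D_{\nu}(A)$ is built from the cofactors, i.e. from coordinates of $\M(A)$, together with the gradients of the entries $\B_{u}\cdot\Y_{v}$, which are bounded linear functions of $A$ with norm controlled by $\sigma$ (the $\Y$'s being orthonormal and the entries of the $\B$'s being entries of $A$ or fixed $0/1$'s).

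Next I would exploit the hypothesis that $D_{\nu-1}(A)$ is the coordinate of $\M(A)$ of largest absolute value. Since $U^{'}$ satisfies (\ref{50}), the second part of Proposition \ref{lemma 5} gives $|\vec{M}_{\nu-1}(A)|\geq\frac12 M_{\nu-1}(U^{'})$ for every $A\in U^{'}$; because $\M(A)=\vec{M}_{\nu-1}(A)$ has boundedly many coordinates, the largest coordinate $|D_{\nu-1}(A)|$ is at least a constant (depending only on $M,N$) times $M_{\nu-1}(U^{'})$. Now I would compute $\nabla D_{\nu}(A)$ by differentiating the Laplace expansion along the row/column in which $D_{\nu-1}$ sits as a cofactor: the derivative with respect to the particular matrix entry $\gamma$ that multiplies $D_{\nu-1}$ in that expansion equals $\pm D_{\nu-1}(A)$ plus terms coming from differentiating the other cofactors. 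The ``$\pm D_{\nu-1}(A)$'' term is large, of size $\gtrsim M_{\nu-1}(U^{'})$; the remaining terms are products of coordinates of lower-order $\vec M$-vectors with bounded derivatives of entries, hence bounded by $C(M,N,\sigma)\cdot|\vec{M}_{\nu}(A)|$-type quantities or by $|\vec M_{\nu-1}|$ times something small — and crucially, by hypothesis (\ref{54}), $|\vec{M}_{\nu}(A)|<C_{3}\psi M_{\nu-1}(U^{'})$, so choosing $C_{3}=C_{3}(N,\psi)$ small enough forces these error terms to be a small fraction of $M_{\nu-1}(U^{'})$. Combining, $|\partial D_{\nu}/\partial\gamma|\geq\bigl(c_1(M,N)-c_2(M,N,\sigma)C_3\psi\bigr)M_{\nu-1}(U^{'})\geq C_{4}(M,N,\sigma)M_{\nu-1}(U^{'})$, which gives (\ref{56}) since $|\nabla D_{\nu}(A)|$ dominates any single partial derivative.

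The main obstacle I anticipate is bookkeeping rather than conceptual: one has to set up the Laplace/cofactor expansion so that it is transparent which coordinate of $\M(A)$ is being differentiated into and which matrix entry of $A$ it is paired with, keeping careful track of the fact that $D_{\nu-1}$ is only \emph{one particular} coordinate of $\M(A)$ (the largest one) and that a given entry $\gamma_{pq}$ of $A$ appears in several of the $\nu\times\nu$ minors defining different coordinates of $\vec{M}_{\nu}$. Getting the constant $C_{4}$ to come out positive requires that the ``diagonal'' contribution $\pm D_{\nu-1}(A)$ genuinely survives and is not cancelled; here one uses that after differentiation the cross terms are all controlled by the small quantity in (\ref{54}) and by lower-order $\vec M$'s with small coefficients, together with Proposition \ref{lemma 5} to pass from a bound on $|\vec{M}_{\nu-1}(A)|$ to a bound on $M_{\nu-1}(U^{'})$. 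Since this is precisely the content of lemma 6 of \cite{SSS}, and nothing in the statement involves the measure $\tau$, I would simply cite Schmidt for the detailed computation after indicating this structure.
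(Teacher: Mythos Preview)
Your proposal is correct and aligns with the paper's treatment: the paper does not prove this proposition at all but simply cites Schmidt \cite{SSS} (lemma 6 there), exactly as you ultimately suggest doing. Your sketch of the cofactor/Laplace-expansion mechanism behind Schmidt's argument is accurate extra detail, but the paper itself offers no proof beyond the citation.
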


\vspace{10mm}

\subsubsection{proof of lemma \ref{hard theorem}}

\begin{proof}

\noindent Given $\psi >0$ define for every $0\leq\nu\leq N$ 

\begin{equation}
\psi_{\nu}=\left(\frac{\epsilon_{0}}{2}\right)^{\nu}\psi. 
\label{psi_N}
\end{equation}

\noindent Assuming $\psi =\psi_{\nu}$ for $0\leq \nu\leq N$ and noticing that in our setup $\sigma =\sigma(\tau)$, let 

\begin{center} 
$C_{3}^{\nu}=C_{3}^{\nu}(N , \psi)$\hspace{5mm}and \hspace{5mm} $C_{4}=C_{4}(M,N,\sigma(\tau))$
\end{center}

\noindent be as in proposition \ref{lemma 6}. 

\noindent Define

\begin{equation}
C_{3}^{Min}=C_{3}^{Min}(N,\psi, \tau)=\min_{1\leq\nu\leq N} C_{3}^{\nu}.
\label{c9min}
\end{equation}

\noindent Given $\psi >0$ let $\psi_{N}$ be as defined in (\ref{psi_N}) 
and let $\alpha_{1}$ be so small as to satisfy

\begin{equation}
(\alpha_{1}) ^{\frac{1}{2}}<\min\{\frac{1}{2},\frac{1}{4} \frac{\psi_{N}\epsilon_{0}}
{N\max_{\{ 0\leq \nu\leq N\} }\binom{N}{\nu}}, C_{3}^{min}, \frac{15}{32}\frac{C_{4}}{\psi} \}.
\label{alpha}
\end{equation}

\vspace{10mm}

\noindent Our initial setup is a closed ball $U\subset\mathbb{R}^{H}$ with
$\rho(U)=\rho_{0}<1$. For every $A\in U$, $|A|\leq \sigma$ for some positive $\sigma =\sigma (\tau)$
and $0<\beta <1$ is given.
We shall prove the lemma by induction on $\nu$. 

\noindent 1. \underline{Base of the induction}. 

\noindent 
For $\nu =0$, $\psi_{0}=\psi$. 
Let $\mu_{0}<\frac{1}{\psi}$ and let $\Y_{1},...,\Y_{N}$ be any set of orthonormal 
vectors in $\mathbb{R}^{L}$ . 
By definition we have 
for \underline{any} ball $V\subset\mathbb{R}^{H}$ and 
any $A\in V$,
\begin{equation}
\vec{M}_{0}(A)=1>\psi_{0}\mu\rho_{0}=\psi_{0}\mu_{0}\rho_{0}M_{-1}(V).
\label{base}
\end{equation}

\noindent 2. \underline{The induction hypothesis}.

\vspace{5mm}
We assume the validity of the lemma for $\nu -1$ ($\nu\geq 1$), i.e., 
there exists $\mu_{\nu -1}$ such that 
player White can play in such a way that
the first of player Black's balls $U(i_{\nu -1})\subset U$
to satisfy 
\begin{center}
$\rho(U(i_{\nu -1}))<\mu_{\nu -1}\rho_{0}$
\end{center}   

\noindent satisfies for every $A\in U(i_{\nu -1})$
\begin{equation}
|\vec{M}_{\nu -1}(A)|>\psi_{\nu -1}\rho_{0}\mu_{\nu -1} M_{\nu -2}(U(i_{\nu -1})).
\label{induction}
\end{equation}

\vspace{5mm}

\noindent We assume that $U(i_{\nu -1})$ with this property is given and thus 
\begin{center}
$(U, U(i_{\nu -1}), \sigma (\tau), M, N,  \psi_{\nu -1} , \mu _{\nu -1}, \nu -1 )$  
\end{center}
satisfy (*) by the induction hypothesis and our initial conditions. 
We shall define $\mu_{\nu}$ and show how player White can play 
in such a way that $U(i_{\nu})$ satisfies (\ref{46}).
\\\\

\noindent Let $j_{\nu}$ 
be the first integer exceeding $i_{\nu -1}$ satisfying
\begin{equation}
\label{3 conditions}
\rho(U(j_{\nu}))<\frac{1}{2}C_{1}\rho (U(i_{\nu -1}))\cdot\min\{\psi_{\nu},  \frac{1}{8}\psi_{N}\frac{C_{4}}{C_{2}} 
\}.
\end{equation}

\noindent (As we shall soon see, $U(j_{\nu})$ will play the part of $U'$ in propositions \ref{lemma 5}-\ref{lemma 6}).

\noindent By definition

\begin{center}
$\rho(U(i_{\nu -1}))\geq\alpha_{1} \beta \rho_{0}\mu_{\nu -1}$,
\end{center}

\noindent and thus there exists
\begin{equation}
c_{\nu -1}=c_{\nu -1}
(M ,N, \alpha_{1}, \beta , \psi ,\tau )
\label{Cnu}
\end{equation}

\noindent such that 

\begin{center}
$\rho(U(j_{\nu}))\geq c_{\nu -1}\rho_{0}$.
\end{center}

\noindent Set 

\begin{equation}
\mu_{\nu}=\mu_{\nu}(M, N, \alpha_{1} , \beta , \psi,\tau)=
\alpha_{1} ^{\frac{1}{2}}c_{\nu -1}
\label{64ii}
\end{equation}

\noindent and

\begin{equation}
K_{\nu -1}=K_{\nu -1}
(M ,N, \alpha_{1}, \beta , \psi ,\tau , \rho_{0})=\frac{\rho(U(j_{\nu}))}{\rho_{0}}.
\label{K}
\end{equation}

\noindent For later use we observe that trivialy $K_{\nu -1}\geq c_{\nu -1}$.

\noindent When $U(i_{\nu -1})$ is given, player White plays in an arbitrary
way until $U(j_{\nu})$ is reached.\\

\noindent \underline{The trivial case }.

\noindent If it so happens that for every $A\in U(j_{\nu})$
\begin{center}
$|\vec{M}_{\nu}(A)|> \psi_{\nu}\rho_{0}\mu_{\nu}M_{\nu -1}(U(j_{\nu}))$,
\end{center}

\noindent player White's strategy is to play in an arbitrary way until the first ball 
$U(i_{\nu})$ to satisfy

\begin{center}
$\rho(U(i_{\nu}))<\mu_{\nu}\rho_{0}$
\end{center}

\noindent is reached, and every $A\in U(i_{\nu})$ will trivially satisfy (\ref{46}).

\noindent \underline{The non-trivial case}.

\noindent Suppose that there exists $A^{'}\in U(j_{\nu})$ such that

\begin{equation}
|\vec{M}_{\nu}(A^{'})|\leq \psi_{\nu}\rho_{0}\mu_{\nu}M_{\nu -1}(U(j_{\nu})).
\label{65}
\end{equation}

\noindent Set
\begin{center}
$\epsilon =\frac{1}{16}\psi_{N}\frac{C_{4}}{C_{2}}(\leq\frac{1}{16}\psi_{\nu -1}\frac{C_{4}}{C_{2}})$.
\end{center}

\noindent We notice that since $U(j_{\nu})\subset U(i_{\nu -1})$ we have by (\ref{induction})

\begin{equation}
M_{\nu -1}(U_{j_{\nu}})>\psi_{\nu -1}\rho_{0}\mu_{\nu -1} M_{\nu -2}(U(i_{\nu -1})).
\end{equation}

\noindent By proposition \ref{cor 2},
for any $A^{'}$ and $A^{''}$ in $U(j_{\nu})$

\begin{equation}
|\nabla D_{\nu}(A^{'})-\nabla D_{\nu}(A^{''})|<C_{2}\epsilon\rho_{0}\mu_{\nu -1} M_{\nu -2}(U(i_{\nu -1}))<
\frac{1}{16}C_{4}M_{\nu -1}(U(j_{\nu})).
\label{63}
\end{equation}

\noindent By (\ref{3 conditions}), $U(j_{\nu})$ satisfies (\ref{50}), and since $\rho_{0}<1$
and $\mu _{\nu}<C_{3}^{\nu -1}$ by (\ref{alpha}) and (\ref{64ii}), 
the point $A^{'}$ satisfies (\ref{54}).

\noindent As no special assumptions were made neither on the $\B_{\nu}$'s nor the $\Y _{\nu}$'s, 
we may assume $D_{\nu -1}(A')$
has the largest absolute value among the coordinates of $\M(A')$. By proposition \ref{lemma 6},

\begin{equation}
|\nabla D_{\nu}(A^{'})|>C_{4}M_{\nu -1}(U(j_{\nu})).
\label{66}
\end{equation}

\noindent Let 

\begin{equation}
D^{'}=\nabla D_{\nu}(A^{'}).
\label{67}
\end{equation}

\noindent Denote the center of $U(j_{\nu})$ by $A(j_{\nu})$. \\

\noindent If 

\begin{equation}
D_{\nu}(A(j_{\nu}))\geq 0,
\label{68}
\end{equation}

\noindent let

\begin{center}
$A_{M}=A(j_{\nu})+(1-\alpha_{1})\frac{1}{|D^{'}|}\rho(U(j_{\nu}))D^{'}$.
\end{center}

\noindent Thus

\begin{center}
$\left(A_{M}-A(j_{\nu})\right)\cdot D^{'}= (1-\alpha_{1})\rho(U(j_{\nu}))|D^{'}|$.
\end{center}

\noindent Since $\alpha_{1} <\frac{1}{4}$ we have 

\begin{equation}
\left(A_{M}-A(j_{\nu})\right)\cdot D^{'}>\frac{3}{4}\rho(U(j_{\nu}))|D^{'}|.
\label{70}
\end{equation}

\noindent In view of (\ref{68}), (\ref{63}), (\ref{70}) and (\ref{64ii})

\vspace{10mm}

$D_{\nu}(A_{M})\geq D_{\nu}(A_{M})-D_{\nu}(A(j_{\nu}))=$

$\int\limits_0^1(A_{M}-A_{j_{\nu}})\cdot (\nabla D_{\nu}((1-s)A(j_{\nu})+sA_{M}))ds=$

$(A_{M}-A(j_{\nu}))\cdot D^{'}+\int\limits_0^1((A_{M}-A(j_{\nu}))\cdot 
(\nabla D_{\nu}((1-s)A(j_{\nu})+sA_{M})-D^{'}))ds$

$\geq \frac{3}{4}\rho(U(j_{\nu}))|D^{'}|-2(1-\alpha_{1})\rho(U(j_{\nu}))\frac{1}{16}C_{4}M_{\nu -1}(U(j_{\nu}))$\\

$>\frac{15}{32}C_{4}K^{\nu -1}\rho_{0}M_{\nu -1}(U(j_{\nu}))>
\alpha_{1}^{\frac{1}{2}} K^{\nu -1}\psi_{\nu -1}\rho_{0}M_{\nu -1}(U(j_{\nu}))$.\\

\noindent Thus
\begin{center}
$D_{\nu}(A_{M})>\alpha_{1}^{\frac{1}{2}} K^{\nu -1}\psi_{\nu -1}\rho_{0}M_{\nu -1}(U(j_{\nu}))$.
\end{center}

\noindent In the case 
\begin{center}
$D_{\nu}(A(j_{\nu}))< 0$,
\end{center}

\noindent we let 
\begin{center}
$A_{M}=A(j_{\nu})-(1-\alpha_{1})\frac{D^{'}}{|D^{'}|}\rho(U(j_{\nu}))$,
\end{center}

\noindent and we get
\begin{center}
$-D_{\nu}(A_{M})<
-\alpha_{1}^{\frac{1}{2}} K^{\nu -1}\psi_{\nu -1}\rho_{0}M_{\nu -1}(U(j_{\nu}))$.
\end{center}

\noindent Combining we get

\begin{equation}
|D_{\nu}(A_{M})|>\alpha_{1}^{\frac{1}{2}} K^{\nu -1}\psi_{\nu -1}\rho_{0}M_{\nu -1}(U(j_{\nu})).
\label{norm on W}
\end{equation}

\noindent Let
\begin{center}
$\Omega=B(A(j_{\nu}),(1-\alpha_{1})\rho(U(j_{\nu})))$.
\end{center}

\noindent Since $A_{M}\in \Omega$, we conclude by (\ref{norm on W})

\begin{equation}
\|D_{\nu}\|_{\Omega}>\alpha_{1}^{\frac{1}{2}} K^{\nu -1}\psi_{\nu -1}\rho_{0}M_{\nu -1}(U(j_{\nu})).
\label{norm}
\end{equation}

\vspace{5mm}

\noindent By (\ref{good}) and (\ref{epsilon_0}) we have

\begin{center}
$\tau\left(\{A\in \Omega :|D_{\nu}(A)|
<\epsilon_{0}\alpha_{1}^{\frac{1}{2}} K^{\nu -1}\psi_{\nu -1}\rho_{0}M_{\nu -1}(U(j_{\nu}))\}\right)
\leq \frac{1}{2}\tau (\Omega)$.
\end{center}

\noindent Thus there exists $A_{0}\in \Omega\cap supp(\tau)$ such that 

\begin{equation}
|\vec{M}_{\nu}(A_{0})|\geq|D_{\nu}(A_{0})|\geq\epsilon_{0}\alpha_{1}^{\frac{1}{2}} 
K^{\nu -1}\psi_{\nu -1}\rho_{0}M_{\nu -1}(U(j_{\nu})),
\label{first}
\end{equation}

\noindent and player White chooses a ball $W(j_{\nu})=B(A_{0},\alpha_{1}\rho(U_{j_{\nu}}))$.
Assume $A\in W(j_{\nu})$.

\noindent Notice that every coordinate of $\vec{M}_{\nu}(A)$ is a certain determinant of a
$\nu \times \nu $ matrix
depending on some $\gamma_{ij}$. The absolute values of the 
partial derivatives of every such determinant are no greater then 

\begin{center}
$N|\vec{M}_{\nu -1}(A)|\leq NM_{\nu -1}(U(j_{\nu}))$.
\end{center}

\noindent Set $C=N\cdot{\max_{\{ 0\leq \nu\leq N\} }\binom{N}{\nu}}$. 
By elementary calculus, (mean value theorem), 

\begin{center}
$|\vec{M}_{\nu}(A_{0})-\vec{M}_{\nu}(A)|
\leq\sqrt{({\max_{\{ 0\leq \nu\leq N\} }\binom{N}{\nu}})^{2}}\cdot 2\alpha_{1}\rho(U(j_{\nu}))\cdot
NM_{\nu -1}(U(j_{\nu}))
=2C\alpha_{1}\rho(U(j_{\nu}))M_{\nu -1}(U(j_{\nu}))
<2C\frac{\psi_{N}\epsilon_{0}}{4C}\alpha_{1}^{\frac{1}{2}}K^{\nu -1}\rho_{0}M_{\nu -1}(U(j_{\nu}))
\leq\frac{1}{2}\epsilon_{0}\psi_{\nu -1}\alpha_{1}^{\frac{1}{2}}K^{\nu -1}\rho_{0}M_{\nu -1}(U(j_{\nu}))$.
\end{center}

\noindent Combining with (\ref{first}) we get for every $A\in W(j_{\nu})$,
\begin{center}
$|\vec{M}_{\nu}(A)|>\frac{1}{2}\epsilon_{0}\psi_{\nu -1}\alpha_{1}^{\frac{1}{2}}K^{\nu -1}\rho_{0}M_{\nu -1}(U(j_{\nu}))
\geq\psi_{\nu}\mu_{\nu}\rho_{0}M_{\nu -1}(U(j_{\nu}))$.
\end{center}

\noindent We conclude that every $A\in U(j_{\nu}+1)$ satisfies (\ref{46}) and 
the first ball $U(i_{\nu})$ satisfying 

\begin{center}
$\rho(U(i_{\nu}))<\mu_{\nu}\rho_{0}$,
\end{center}

\noindent will satisfy (\ref{46}). 
Player White can play in an arbitrary way until 
such a ball is reached by player Black.

\end{proof}

\section{Application to fractals}

\noindent A map $\phi : \RN \rightarrow \RN$ is a  \textbf{similarity}
if it can be written as
\begin{center}
$\phi (\x) = \rho \Theta (\x) + \y,$
\end{center}
where $\rho \in \mathbb R^{+}$, $\Theta \in O (N,\mathbb R)$ 
and $\y \in \RN $.
It is said to be \textbf {contracting} if $\rho < 1$.
It is known (see \cite{H} for a more general statement) that for
any finite family
$\phi_1,\dots, \phi_m$ of  contracting similarities there exists a
unique nonempty compact set ${\cal{K}}$,
called the \textbf{attractor} or \textbf{limit set} of the family,
such that
\begin{center}
${\cal{K}} = \bigcup_{ i = 1 }^m\phi_i ( {\cal{K}} ).$
\end{center}
Say that
$\phi_1, \dots,  \phi_m$ as above satisfy the {\textbf{open set
condition} (first introduced in \cite{H}) 
if there exists an open subset $U \subset \RN $ such that
\[\phi_i ( U ) \subset U \ \mathrm{for \ all \ \ } i=1, \ldots,
m\,,\]
          and
\[i \ne j \Longrightarrow \phi_i ( U ) \cap \phi_j(U) =
\varnothing\,.\]
The family $\{\phi_i\}$ is called \textbf{irreducible} if there is no
finite collection of proper affine subspaces which is invariant under
each $\phi_i$.
Well-known self-similar sets, like Cantor's ternary set, Koch's curve
or Sierpinski's gasket, are all examples of attractors of irreducible
families of
contracting similarities satisfying the open set condition.\\

\noindent As an immediate consequence of theorem \ref{main theorem}
we have,

\begin{cor}
\label{application}
Let $\left\{ \phi_1,...,\phi_k\right\}$ 
be a finite irreducible family of 
contracting similarity maps of $\mathbb{R}^{MN} $ satisfying the open set condition
with ${\cal{K}}$ its attractor. Then 
\begin{center}
dim($\textbf{BA}(M,N)\cap {\cal{K}}$)=dim ${\cal{K}}$.
\end{center}
\end{cor}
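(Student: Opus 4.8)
\textbf{Proof proposal for Corollary \ref{application}.}

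The plan is to deduce the corollary from Theorem \ref{main theorem} together with the standard fact from fractal geometry that the natural self-similar measure on an attractor of an irreducible IFS satisfying the open set condition is absolutely friendly. First I would recall that, by Hutchinson's theorem \cite{H}, there is a unique probability measure $\tau$ on $\mathbb{R}^{MN}$ satisfying the self-similarity relation $\tau = \sum_{i=1}^{k} r_i^{s}\, (\phi_i)_* \tau$, where $s=\dim {\cal{K}}$ is the similarity dimension and $r_i$ is the contraction ratio of $\phi_i$; this $\tau$ is a Borel, finite (indeed probability) measure with $supp(\tau) = {\cal{K}}$, and ${\cal{K}}$ is compact. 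The key input is that, under the open set condition, $\tau$ is Ahlfors $s$-regular, hence doubling (condition (ii) of the definition of absolutely friendly with some $D>0$); and under the additional irreducibility hypothesis it satisfies the hyperplane-decay estimate (condition (i)), i.e. $\tau(B(\x,\rho)\cap {\cal P}^{(\epsilon)}) \le C(\epsilon/\rho)^a \tau(B(\x,\rho))$ for some $a>0$ uniformly over affine hyperplanes ${\cal P}$. This is exactly the content of the results of Kleinbock--Lindenstrauss--Weiss \cite{KLW} on friendly measures; irreducibility guarantees that $\tau$ gives no mass to any proper affine subspace, and in fact gives the uniform polynomial decay near hyperplanes. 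So $\tau$ is absolutely friendly.

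Having established that $\tau$ is absolutely friendly and supported on the compact set ${\cal{K}}\subset \mathbb{R}^{MN}$, Theorem \ref{main theorem} applies directly and yields that ${\cal{K}}\cap \textbf{BA}(M,N)$ is a winning set for $\cal{K}$-Schmidt's game played on ${\cal{K}}$. The final step is to convert ``winning'' into a statement about Hausdorff dimension. For Schmidt's game played on $\mathbb{R}^n$, Schmidt proved that an $\alpha$-winning set has full Hausdorff dimension; the analogue for the game played on a compact set ${\cal{K}}$ equipped with an Ahlfors-regular (or merely absolutely friendly) measure is that a winning set has full dimension, namely $\dim {\cal{K}}$. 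Concretely: since ${\cal{K}}\cap \textbf{BA}(M,N)\subset {\cal{K}}$ we trivially have $\dim({\cal{K}}\cap\textbf{BA}(M,N))\le \dim {\cal{K}}$; for the reverse inequality one uses the winning property to build, inside ${\cal{K}}\cap\textbf{BA}(M,N)$, a Cantor-like subset on which a Frostman-type mass distribution of exponent arbitrarily close to $\dim {\cal{K}}$ can be placed, using the doubling property of $\tau$ to control the ratios of successive radii in White's and Black's balls. This gives $\dim({\cal{K}}\cap\textbf{BA}(M,N))\ge \dim {\cal{K}}$, and the two inequalities together give equality.

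The main obstacle is purely the verification that the self-similar measure is absolutely friendly in the sense required — in particular the uniform hyperplane-decay condition (i), which is where irreducibility is essential (without it the attractor could lie in a proper affine subspace, or $\tau$ could concentrate near one, and both conditions (i) and the dimension conclusion would fail). Everything else is bookkeeping: the open set condition gives Ahlfors regularity hence doubling by a well-known packing argument, and the passage from ``winning'' to ``full dimension'' is the standard Schmidt-game dimension estimate, adapted to the compact setting exactly as in \cite{LF} for the case $N=1$. I would cite \cite{KLW} for the friendliness of $\tau$, \cite{H} for existence of $\tau$ and ${\cal{K}}$, and \cite{S}, \cite{LF} for the dimension-of-winning-sets estimate, and keep the written proof to a few lines.
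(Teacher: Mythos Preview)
Your proposal is correct and follows essentially the same route as the paper: pick the natural measure on ${\cal{K}}$, invoke \cite{KLW} to verify it is absolutely friendly, apply Theorem~\ref{main theorem} to obtain that ${\cal{K}}\cap\textbf{BA}(M,N)$ is winning, and then cite the result from \cite{LF} that winning sets on such ${\cal{K}}$ have full Hausdorff dimension. The only cosmetic difference is that the paper takes $\tau$ to be the restriction of the $\delta$-dimensional Hausdorff measure to ${\cal{K}}$ rather than the Hutchinson self-similar measure, but under the open set condition these coincide up to a constant, so the argument is the same.
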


\begin{proof}

Let $\delta $ be
the Hausdorff dimension of
${\cal{K}}$, and $\tau$ the restriction of the $\delta $-dimensional Hausdorff
measure to ${\cal{K}}$. 

It is known that $\tau $ is an absolutely friendly measure.
(See \cite{KLW}(Theorem 2.3, Lemma 8.2 and 8.3)).
Furthermore, it was proved in \cite{LF} (Corollary 5.3) that for this particular case a winning set enjoys
full dimension. 

\end{proof}

\section{Windim}

Let $M$ be a complete metric space. Define the winning dimension of $S\subset M$, Windim($S$), as
follows. If $S$ is $\alpha$-winning for no $\alpha >0$ then Windim($S$)=0. 

\noindent Otherwise Windim($S$) is
the least upper bound on all $0<\alpha<1$ such that $S$ is $\alpha$-winning. 

\noindent In \cite{SSS} Schmidt was able to prove that the winning dimension of $\textbf{BA}(M,N)$
in $\mathbb{R}^{H}$ is $\frac{1}{2}$. 
This is the best possible result for any proper subset of $\mathbb{R}^{H}$.
In this paper as well in \cite{LF}, no upper bound on 
the winning dimension of $\textbf{BA}(M,N)\cap {\cal{K}}$
(where ${\cal{K}}$ is as defined in corollary \ref{application}) is given and
a natural question would be whether one could improve the proof leading to the 
optimal upper bound of $\frac{1}{2}$ similar to the case in \cite{SSS}.
In what follows we prove that in general one cannot.\\

\noindent Let $C$ denote the usual middle third Cantor set and we remind that $\textbf{BA}(1,1)$
is the set of badly approximable numbers. 

\begin{pro}
Windim$(C\cap \textbf{BA}(1,1))\leq \frac{1}{3}$
\end{pro}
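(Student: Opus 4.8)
The plan is to exhibit, for every $\alpha > \tfrac{1}{3}$, an explicit strategy for player Black in the $\alpha$-game played on $C$ that defeats player White, i.e.\ forces the outcome point $z$ to lie outside $\mathbf{BA}(1,1)$. The key structural fact about the middle-third Cantor set is its self-similar gap structure: any closed interval $I$ of length $\ell$ that meets $C$ and is not too small contains a ``central gap'' of the deleted-middle-thirds construction whose removal splits $I\cap C$ into two pieces, each of diameter roughly $\tfrac{1}{3}\ell$, separated by a gap of size roughly $\tfrac{1}{3}\ell$. Consequently, when White is forced to place a ball $W(k)$ of radius $\alpha\rho(U(k))$ inside Black's interval $U(k)$, if $\alpha > \tfrac 13$ then $W(k)$ must contain a full ``level'' of the Cantor hierarchy; Black then has genuine freedom in his next move, because both surviving Cantor sub-intervals of $W(k)$ are available to him as centers for $U(k+1)$. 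This is the mechanism that makes $\alpha > \tfrac13$ losing for White on $C$, in contrast to the full line where White wins all the way up to $\tfrac12$.

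The next step is to convert this freedom into a Liouville-type construction. Recall that a real number $z$ fails to be badly approximable precisely when for every $C>0$ there is a rational $p/q$ with $|z - p/q| < C q^{-2}$, equivalently when $\langle q z\rangle$ is much smaller than $1/q$ for infinitely many $q$. Along the play, after some bounded number of rounds the common interval has radius comparable to $(\alpha\beta)^k\rho$, hence length roughly $3^{-n_k}$ for an increasing sequence $n_k$, and its endpoints are triadic rationals with denominators $3^{n_k}$. Black's strategy is: at a carefully chosen subsequence of stages, use the two-fold choice afforded by $\alpha > \tfrac13$ to steer $z$ into the sub-interval of $U(k)$ that is closest to a rational with small denominator (e.g.\ a dyadic or integer point), thereby guaranteeing $\langle q_k z\rangle < \varepsilon_k / q_k$ with $\varepsilon_k \to 0$ and $q_k \to \infty$. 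Accumulating these infinitely many good approximations shows $z \notin \mathbf{BA}(1,1)$ no matter how White responds, so $C \cap \mathbf{BA}(1,1)$ is not $\alpha$-winning for any $\alpha > \tfrac13$; taking the supremum gives $\mathrm{Windim}(C\cap\mathbf{BA}(1,1)) \le \tfrac13$.

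The main obstacle is bookkeeping the interaction between the two parameters: White picks $\alpha$ (which we take $> \tfrac13$), but Black must succeed for \emph{some} $\beta$, and the radii contract by $\alpha\beta$ per round rather than landing exactly on powers of $3$. One must therefore track how many rounds elapse between successive ``decision stages'' where Black genuinely has the two-sided Cantor choice available, verify that the ratio of consecutive denominators $q_{k+1}/q_k$ stays controlled, and check that the slack $1-\alpha$, or more precisely $\alpha - \tfrac13 > 0$, is enough to guarantee that at least one of the two Cantor children of $W(k)$ actually contains a point within distance $\varepsilon_k/q_k$ of the target rational. A clean way to organize this is to first prove a combinatorial lemma: if $\alpha > \tfrac13$ then for every interval $U$ meeting $C$ with $\rho(U)$ small, and every White response $W \subset U$, the set $W \cap C$ contains two points whose triadic expansions first differ at some bounded depth below that of $U$, and these two points lie on opposite sides of a gap of comparable size. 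Granting that lemma, the Diophantine estimate and the passage to the limit are routine. I expect the combinatorial lemma, with the correct quantitative constants, to be the only genuinely delicate point.
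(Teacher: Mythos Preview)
Your structural intuition is sound---when $\alpha>\tfrac13$, White's ball is too wide to avoid a whole Cantor cylinder, and this does cost White the game---but you are working much harder than necessary, and the hard part of your plan (the Liouville construction) is both unnecessary and, as stated, not quite right.

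The paper's argument is a one-line trick that your proposal overlooks: Black does not need to manufacture infinitely many good rational approximations, he only needs to force the limit point to be a \emph{single} rational. Given $\alpha=\tfrac13+\tfrac{1}{3^N}$, Black takes $\beta=\tfrac{1}{3^{N-1}+1}$, so that $\alpha\beta=3^{-N}$ exactly, and opens with $U(0)=B(0,1)$. The radii are then $\rho(U(k))=3^{-Nk}$ on the nose. Because $0$ is the leftmost point of $C$, the gap $(3^{-Nk-1},2\cdot 3^{-Nk-1})$ forces White's center to lie in $[0,3^{-Nk-1}]$; a two-line computation then shows $B(0,3^{-N(k+1)})\subset W(k)$ regardless of White's move. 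So Black can recentre at $0$ forever, and the intersection is $\{0\}\not\subset\mathbf{BA}(1,1)$. No Diophantine bookkeeping, no subsequence of decision stages, no Liouville number.

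Two specific issues with your version. First, the suggestion to steer toward ``a dyadic or integer point'' is off: the natural rational targets in $C$ are triadic endpoints $p/3^n$, and in fact the cleanest target is a \emph{fixed} one such as $0$. Second, your ``two-fold choice'' lemma is correct, but it does not by itself let Black converge to a prescribed rational: after Black picks a child of $I_k$, White's next move determines a new cylinder $I_{k+1}$ whose endpoints need not agree with those of $I_k$, so ``always go left'' does not pin down a single triadic rational without the extra synchronisation $\alpha\beta\in 3^{-\mathbb{N}}$ that the paper exploits. Your plan could presumably be salvaged by tracking endpoints more carefully, but once you see that Black may simply sit at $0$, there is nothing left to do.
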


\begin{proof}
\noindent For every $k\in\mathbb{N}$ we denote by $U(k)$ (respectively $W(k)$)
player Black's kth ball choice ( respectively player White's kth ball choice).

\noindent Given any $\alpha >\frac{1}{3}$, we prove that player Black can 
always pick $\beta =\beta(\alpha)$ and specify a strategy such that 
$\cap_{k=0}^{\infty}W(k)=\cap_{k=0}^{\infty}W(k)$ is not a badly approximable number.

\noindent Given $\alpha =\frac{1}{3} +\epsilon$ for some $\epsilon >0$, 
there exists $N\in\mathbb{N}^+$ such that $\frac{1}{3^{N}}<\epsilon $. 

\noindent Thus it is sufficient to prove the proposition for 
$\alpha=\frac{1}{3}+\frac{1}{3^N}$ 
for any given $N$.

\noindent Given $\alpha =\frac{1}{3}+\frac{1}{3^{N}}$ let

\begin{center}
$\beta =\frac{1}{3^{N-1}+1}$ \hspace{5mm} and \hspace{5mm} $U(0)=B(0,1)$.
\end{center}

\noindent We have, 

\begin{center}
$\rho(U(k))=\frac{1}{3^{Nk}}$ \hspace{5mm} and \hspace{5mm} $\rho(W(k))=\frac{1}{3^{Nk+1}}+\frac{1}{3^{N(k+1)}}$.
\end{center}
We prove by induction on $k$ that player Black can 
always choose $0$ as the center of his balls. 
Obviously for $k=0$ the condition is satisfied.

\noindent Assume for $k$, i.e., $U(k)=B(0, \frac{1}{3^{Nk}})$. 
As $\rho(W(k))=\frac{1}{3^{Nk+1}}+\frac{1}{3^{N(k+1)}}$, the rightmost point player White can choose for his center
is $\frac{1}{3^{Nk+1}}$. To see this notice that the next point bigger then $\frac{1}{3^{Nk+1}}$ in $C$
is $\frac{2}{3^{Nk+1}}$. But player White can't choose this point on account that his radius is too large, i.e., 
$\rho (W(k))>\frac{1}{3^{Nk+1}}$.

\noindent We notice that the closed interval 
$[-\frac{1}{3^{N(k+1)}},\frac{1}{3^{N(k+1)}}+\frac{1}{3^{Nk+1}}]\subset W(k)$
for any choice of player White's ball $W(k)$. In particular,

\begin{center}
$U(k+1)=B(0,\frac{1}{3^{N(k+1)}})\subset W(k)$ for any possible $W(k)$.
\end{center}

\end{proof}

\section{Measure of Intersection}


The result regarding the intersection's dimension of $\textbf{BA}(M,N)$ and the compact support
of an absolutely friendly measure on $\RH$ (where $H=M\cdot N$) is similar to that of the dimension of 
$\textbf{BA}(M,N)$, i.e., in both cases the result is a full dimension.
It is well known that $\lambda(\textbf{BA}(M,N))=0$ where $\lambda$ is the Lebesgue measure in $\RH$.
It is therfore logical to ask the following:   
\begin{question}
Let $\tau$ be an absolutely friendly measure on $\RH$ supported on ${\cal{K}}$, a compact subset of $\RH$.
Is it true in general that
\begin{center}
$\tau({\cal{K}}\cap\textbf{BA}(M,N))=0$?
\end{center}
\end{question}

\noindent The answer is negative as the following example demonstrates.\\

\noindent Consider continued fraction expensions 
$\left[0;a_{1},a_{2},...a_{n},...\right]$
such that for every $i\in\mathbb{N}$, $a_{i}\in \{1,3\}$.

\noindent Let $A_{a_{1},a_{2},...a_{n}}$ denote the interval containing all numbers with continued fraction
expention initial segment $\left[0;a_{1},a_{2},...a_{n}\right]$.

\noindent Thus for example $A_{1}=[\frac{1}{2},1]$, $A_{3}=[\frac{1}{4},\frac{1}{3}]$ and
$A_{1,1,1,3}=[\frac{7}{11}, \frac{9}{14}]$.

\noindent Let $I=[0,1]$ be the 0'th stage of the construction, $A_{1}$ and $A_{3}$ belong to the first stage
of the construction $E_{1}$, $A_{1,1}$, $A_{1,3}$ $A_{3,1}$ $A_{3,3}$ to the second, $E_{2}$ and so on.

\noindent By definition, if there exists $i\in\mathbb{N}$, $1\leq i\leq n$ such that $a_{i}\neq a'_{i}$, then 

\noindent $A_{a_{1},a_{2},...a_{i},...,a_{n}}\cap A_{a_{1},a_{2},...a'_{i},...,a_{n}}=\emptyset$.

\noindent We recall that if we recursively define
\begin{equation}
q_{-1}=0,\hspace{10mm} q_{0}=1 \hspace{10mm}\text{and} \hspace{10mm}q_{n}=a_{n}q_{n-1}+q_{n-2},
\label{q}
\end{equation}   

\noindent then 
\begin{center}
$l(A_{a_{1},a_{2},...a_{n}})=\frac{1}{q_{n}(q_{n}+q_{n-1})}$, 
\end{center}

\noindent where $l(A_{a_{1},a_{2},...a_{n}})$ is the length of the interval $A_{a_{1},a_{2},...a_{n}}$.

\begin{claim}
For any $n\in\mathbb{N}$,
\begin{center}
$\frac{1}{12}<\frac{l(A_{a_{1},a_{2},...a_{n},a_{n+1}})}{l(A_{a_{1},a_{2},...a_{n}})}<\frac{1}{2}$.
\end{center}
\end{claim}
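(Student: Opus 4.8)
The plan is to use the recursion $q_{n} = a_{n} q_{n-1} + q_{n-2}$ together with the length formula $l(A_{a_1,\dots,a_n}) = \frac{1}{q_n(q_n + q_{n-1})}$ to control the ratio of consecutive lengths purely in terms of the $q$'s. Writing the ratio out,
\begin{equation}
\frac{l(A_{a_1,\dots,a_n,a_{n+1}})}{l(A_{a_1,\dots,a_n})} = \frac{q_n(q_n + q_{n-1})}{q_{n+1}(q_{n+1} + q_n)},
\label{ratio}
\end{equation}
so the task reduces to bounding $q_{n+1}$ in terms of $q_n$ and $q_{n-1}$. The key observation is that since every partial quotient $a_i$ lies in $\{1,3\}$, we have $q_n + q_{n-1} \le q_{n+1} = a_{n+1} q_n + q_{n-1} \le 3q_n + q_{n-1}$, and moreover $q_{n-1} < q_n$ for $n \ge 1$. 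So both $q_{n+1}$ and $q_{n+1} + q_n$ are pinned between explicit constant multiples of $q_n$.

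First I would establish the monotonicity fact $q_{n-1} < q_n$ for all $n \ge 1$ by an immediate induction from the recursion (with $q_0 = 1 > 0 = q_{-1}$, and the step $q_n = a_n q_{n-1} + q_{n-2} \ge q_{n-1} + q_{n-2} > q_{n-1}$ using $a_n \ge 1$ and $q_{n-2} \ge 0$, with strictness from $q_{n-2} \ge q_0 = 1$ once $n \ge 2$). Then for the \emph{upper} bound in \eqref{ratio}: the numerator satisfies $q_n(q_n + q_{n-1}) < q_n \cdot 2q_n = 2q_n^2$, while the denominator satisfies $q_{n+1}(q_{n+1}+q_n) \ge (q_n + q_{n-1})(q_n + q_{n-1} + q_n) > q_n \cdot 2q_n = 2q_n^2$; dividing gives the ratio $< 1$, but to reach $\tfrac12$ I would be slightly more careful, e.g. $q_{n+1} + q_n \ge 2q_n + q_{n-1} \ge 2 q_n$ and $q_{n+1} \ge q_n + q_{n-1} > q_n$, so the denominator exceeds $2 q_n^2$ while a sharper numerator estimate $q_n + q_{n-1} < 2 q_n$ gives numerator $< 2q_n^2$ — so in fact I want denominator $\ge$ something like $2 q_n (q_n + q_{n-1})$ to beat $q_n(q_n+q_{n-1})$ by a factor $2$, which follows from $q_{n+1} \ge q_n + q_{n-1}$ and $q_{n+1} + q_n \ge 2(q_n+q_{n-1})$? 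That last needs $q_n \ge q_n + q_{n-1}$, which is false, so instead I would just directly write $q_{n+1}(q_{n+1}+q_n) \ge (q_n+q_{n-1})(2q_n+q_{n-1}) \ge 2 q_n(q_n + q_{n-1}) \cdot \frac{2q_n + q_{n-1}}{2q_n} \ge 2q_n(q_n+q_{n-1})$ and note the numerator is $q_n(q_n+q_{n-1})$, giving ratio $\le \tfrac12$.

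For the \emph{lower} bound $\tfrac{1}{12}$: use $q_{n+1} \le 3q_n + q_{n-1} < 4q_n$ and $q_{n+1} + q_n < 5q_n < 6q_n$, so the denominator is $< 12 q_n^2 \le 12 \, q_n(q_n + q_{n-1})$ (since $q_{n-1} \ge 0$... actually $q_n + q_{n-1} \ge q_n$, so $12 q_n^2 \le 12 q_n(q_n+q_{n-1})$), while the numerator is $q_n(q_n + q_{n-1}) > q_n^2$; hence the ratio is $> \frac{q_n^2}{12 q_n^2} = \frac{1}{12}$. I expect the main obstacle to be nothing deep — it is purely a matter of choosing the bounds on $q_{n+1}$ tightly enough on the denominator side (and loosely enough on the numerator side) to land the constants $\tfrac{1}{12}$ and $\tfrac12$ cleanly; one should double-check the edge cases $n = 0$ and $n = 1$ where $q_{n-1}$ is small (e.g. $q_{-1} = 0$), since there the strict inequality $q_{n-1} < q_n$ still holds but some of the "$q_{n-1} \ge 1$" type estimates must be avoided.
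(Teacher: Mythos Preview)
Your upper-bound argument, after the meandering, eventually lands on the same cancellation the paper uses: from $q_{n+1}\ge q_n+q_{n-1}$ you get
\[
\frac{q_n(q_n+q_{n-1})}{q_{n+1}(q_{n+1}+q_n)}\le\frac{q_n(q_n+q_{n-1})}{(q_n+q_{n-1})(2q_n+q_{n-1})}=\frac{q_n}{2q_n+q_{n-1}}<\tfrac12,
\]
strict since $q_{n-1}\ge 1$ for $n\ge 1$. That part is fine.

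The lower-bound argument, however, contains a genuine arithmetic slip. From $q_{n+1}<4q_n$ and $q_{n+1}+q_n<5q_n$ you only get $q_{n+1}(q_{n+1}+q_n)<20\,q_n^2$, not $<12\,q_n^2$; and indeed for $n=1$, $a_1=a_2=3$ one has $q_1=3$, $q_2=10$, so the denominator is $10\cdot 13=130>108=12q_1^2$. With your bounds the conclusion is only ratio $>\tfrac{1}{20}$. The problem is that you threw away the $q_{n-1}$ term in the numerator too early: replacing $q_n+q_{n-1}$ by $q_n$ costs exactly the factor you need. The paper instead keeps the numerator as $q_n(q_n+q_{n-1})$ and bounds the denominator by $(3q_n+q_{n-1})(4q_n+q_{n-1})$; the inequality
\[
(3q_n+q_{n-1})(4q_n+q_{n-1})<12\,q_n(q_n+q_{n-1})
\]
then reduces to $q_{n-1}^2<5q_nq_{n-1}$, i.e.\ $q_{n-1}<5q_n$, which is immediate. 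So the fix is simply not to coarsen the numerator before comparing.
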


\begin{proof}
\noindent By (\ref{q}), $l(A_{a_{1},a_{2},...a_{n},3})<l(A_{a_{1},a_{2},...a_{n},1})$.   
Thus, 
\begin{center}
$\frac{l(A_{a_{1},a_{2},...a_{n},a_{n+1}})}{l(A_{a_{1},a_{2},...a_{n}})}
=\frac{q_{n}(q_{n}+q_{n-1})}{q_{n+1}(q_{n+1}+q_{n})}
\leq \frac{q_{n}(q_{n}+q_{n-1})}{(q_{n}+q_{n-1})(2q_{n}+q_{n-1})}
=\frac{q_{n}}{2q_{n}+q_{n-1}}<\frac{1}{2}$.
\end{center}

\noindent On the other hand, 
\begin{center}
$\frac{l(A_{a_{1},a_{2},...a_{n},a_{n+1}})}{l(A_{a_{1},a_{2},...a_{n}})}
=\frac{q_{n}(q_{n}+q_{n-1})}{q_{n+1}(q_{n+1}+q_{n})}
\geq \frac{q_{n}(q_{n}+q_{n-1})}{(3q_{n}+q_{n-1})(4q_{n}+q_{n-1})}>\frac{1}{12}$,
\end{center}

\noindent by a simple calculation and using (\ref{q}).\\

\noindent We define the measure $\tau$ as follows: 
\begin{center}
$\tau(A_{a_{1},a_{2},...a_{i},...,a_{n}})=\frac{1}{2^{n}}$. 
\end{center}

\noindent Follwing W. A. Veech, \cite{V} (section 2, proposition 2.5) and \cite{KW} (section 6, remark 6.2 - 
in which a generalization of Veech's definitions and results are discussed in relation to the friendly conditions), 
it is easily checked that $\tau$ is an absolutely friendly measure and obviously,

\begin{center}
$\tau(\textbf{BA}(1,1)\cap supp(\tau))=1$.
\end{center}

\end{proof}

\end{document}